\documentclass{amsart}%
\usepackage{amssymb}
\usepackage{amsfonts}
\usepackage{amsmath}
\usepackage{graphicx}%
\setcounter{MaxMatrixCols}{30}
%TCIDATA{OutputFilter=latex2.dll}
%TCIDATA{Version=5.50.0.2953}
%TCIDATA{CSTFile=amsartci.cst}
%TCIDATA{Created=Thursday, November 23, 2006 17:59:48}
%TCIDATA{LastRevised=Tuesday, April 13, 2021 18:17:55}
%TCIDATA{<META NAME="GraphicsSave" CONTENT="32">}
%TCIDATA{<META NAME="SaveForMode" CONTENT="1">}
%TCIDATA{BibliographyScheme=Manual}
%TCIDATA{<META NAME="DocumentShell" CONTENT="Articles\SW\AMS Journal Article">}
%TCIDATA{Language=American English}
%BeginMSIPreambleData
\providecommand{\U}[1]{\protect\rule{.1in}{.1in}}
%EndMSIPreambleData
\newtheorem{theorem}{Theorem}
\theoremstyle{plain}

\newtheorem{corollary}{Corollary}

\newtheorem{definition}{Definition}
\newtheorem{example}{Example}

\newtheorem{lemma}{Lemma}

\newtheorem{remark}{Remark}

\numberwithin{equation}{section}
\begin{document}
\title[ ]{A new look at the Hardy-Littlewood-P\'{o}lya inequality of majorization}
\author{Constantin P. Niculescu}
\address{University of Craiova, Department of Mathematics, A.I. Cuza Street 13, Craiova
200585, ROMANIA}
\email{constantin.p.niculescu@gmail.com}
\thanks{Published in \emph{J. Math. Anal. Appl.} \textbf{501} (2021), Issue 2, paper
125211. DOI: 10.1016/j.jmaa.2021.125211}
\subjclass[2000]{Primary 26B25; Secondary 26D10, 46B40, 47B60, 47H07}
\keywords{$\omega$-convex function, strongly smooth function, majorization theory,
ordered Banach space, isotone operator}
\dedicatory{ }
\begin{abstract}
The Hardy-Littlewood-P\'{o}lya inequality of majorization is extended to the
framework of ordered Banach spaces. Several applications illustrating our main
results are also included.

\end{abstract}
\maketitle

\section{Introduction}

In their celebrated book on \emph{Inequalities}, G. H. Hardy, J. E. Littlewood
and G. P\'{o}lya \cite{HLP} have proved an important characterization of
convex functions in terms of a preorder of vectors in $\mathbb{R}^{N}$ called
\emph{majorization}. Given two vectors $\mathbf{x}$ and $\mathbf{y}$ in
$\mathbb{R}^{N}$, we say that $\mathbf{x}$ is \emph{weakly majorized} by
$\mathbf{y}$ $($denoted $\mathbf{x}\prec_{wHLP}\mathbf{y})$ if their
decreasing rearrangements, respectively $x_{1}^{\downarrow}\geq\cdots\geq
x_{N}^{\downarrow}$\ and $y_{1}^{\downarrow}\geq\cdots\geq y_{N}^{\downarrow}$
verify the inequalities
\begin{equation}
\sum_{i=1}^{k}x_{i}^{\downarrow}\leq\sum_{i=1}^{k}y_{i}^{\downarrow}%
\quad\text{for }k=1,\dots,N; \label{wmaj}%
\end{equation}
we say that $\mathbf{x}$ is \emph{majorized} by $\mathbf{y}$ $($denoted
$\mathbf{x}\prec_{HLP}\mathbf{y})$ if in addition
\begin{equation}
\sum_{i=1}^{N}x_{i}^{\downarrow}=\sum_{i=1}^{N}y_{i}^{\downarrow}
\label{eqsums}%
\end{equation}

The basic result relating majorization to convexity is the
\emph{Hardy-Littlewood-P\'{o}lya inequality} \emph{of} \emph{majorization}:

\begin{theorem}
\label{thmHLP}$($Hardy-Littlewood-P\'{o}lya \emph{\cite{HLP}}$)$ If
$\mathbf{x}\prec_{HLP}\mathbf{y},$ then%
\begin{equation}
\sum_{k=1}^{N}f(x_{k})\leq\sum_{k=1}^{N}f(y_{k}) \label{HLPIneq}%
\end{equation}
for every real-valued continuous convex function $f$ defined on an interval
that contains the components of $\mathbf{x}$ and~$\mathbf{y}\mathrm{.}%
\vspace{1mm}$ Conversely, if the inequality $($\emph{\ref{HLPIneq}}$)$ holds
for every real-valued continuous convex function defined on an interval
including the components of $\mathbf{x}$ and $\mathbf{y},$ then $\mathbf{x}%
\prec_{HLP}\mathbf{y}.$
\end{theorem}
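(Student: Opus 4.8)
The plan is to prove the two implications separately, exploiting the fact that the sum $\sum_k f(x_k)$ is symmetric in the components of $\mathbf{x}$, so that I may assume throughout that both vectors are listed in decreasing order, $x_1\geq\cdots\geq x_N$ and $y_1\geq\cdots\geq y_N$. In this notation $x_k=x_k^{\downarrow}$, the majorization hypothesis reads $\sum_{i=1}^{k}x_i\leq\sum_{i=1}^{k}y_i$ for $k<N$ with equality at $k=N$, and I abbreviate the partial sums by $X_k=\sum_{i=1}^{k}x_i$ and $Y_k=\sum_{i=1}^{k}y_i$, so that $D_k:=Y_k-X_k\geq 0$ for every $k$ and $D_N=0$.

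For the direct implication I would invoke the existence of subgradients: for each $k$ pick $\psi_k\in\partial f(x_k)$ (say the right-hand derivative of $f$ at $x_k$). Since $f$ is convex its subdifferential is nondecreasing, and because $x_1\geq\cdots\geq x_N$ the chosen slopes satisfy $\psi_1\geq\cdots\geq\psi_N$. The subgradient inequality at $x_k$, evaluated at $y_k$, gives $f(y_k)-f(x_k)\geq\psi_k(y_k-x_k)$, whence $\sum_{k=1}^{N}\bigl[f(y_k)-f(x_k)\bigr]\geq\sum_{k=1}^{N}\psi_k(y_k-x_k)$. Summing the right-hand side by parts (Abel's identity) converts it into $\sum_{k=1}^{N-1}(\psi_k-\psi_{k+1})D_k+\psi_N D_N$. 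Here $D_N=0$ by the equality of total sums, each difference $\psi_k-\psi_{k+1}\geq 0$ by monotonicity of the slopes, and each $D_k\geq 0$ by majorization; hence the whole expression is nonnegative and $(\ref{HLPIneq})$ follows.

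For the converse I would feed the hypothesis a well-chosen family of continuous convex test functions. Taking $f(t)=t$ and $f(t)=-t$ (both convex) yields simultaneously $\sum x_k\leq\sum y_k$ and $\sum x_k\geq\sum y_k$, hence the equality $(\ref{eqsums})$. To recover the $k$-th majorization inequality I would use the truncations $f_c(t)=(t-c)_+=\max\{t-c,0\}$, which are continuous and convex for every real $c$. The elementary estimate $\sum_{i=1}^{N}(t_i-c)_++kc\geq\sum_{i=1}^{k}t_i^{\downarrow}$, valid for all $c$ with equality at $c=t_k^{\downarrow}$, expresses the partial sum as a minimum, namely $\sum_{i=1}^{k}t_i^{\downarrow}=\min_{c\in\mathbb{R}}\bigl(\sum_{i=1}^{N}(t_i-c)_++kc\bigr)$. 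Applying the hypothesis $\sum_i(x_i-c)_+\leq\sum_i(y_i-c)_+$, adding $kc$ to both sides and minimizing over $c$ then gives $\sum_{i=1}^{k}x_i^{\downarrow}\leq\sum_{i=1}^{k}y_i^{\downarrow}$ for each $k$, which together with the equality case is exactly $\mathbf{x}\prec_{HLP}\mathbf{y}$.

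I expect the delicate point to be the direct implication, specifically getting the signs right: one must orient the subgradient inequality so as to obtain a \emph{lower} bound for $\sum[f(y_k)-f(x_k)]$, which forces the use of slopes at the points $x_k$ rather than at the $y_k$, and one must pair the monotonicity of these slopes with the nonnegativity of the gaps $D_k$ after summation by parts. The vanishing boundary term $\psi_N D_N$ is precisely where the full (as opposed to merely weak) majorization hypothesis enters. The converse is comparatively routine once the truncations $(t-c)_+$ are recognized as the natural dual test functions and the min-representation of the partial sums is established.
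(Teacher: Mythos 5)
Your proof is correct, but note that the paper never actually proves Theorem \ref{thmHLP} --- it is quoted from \cite{HLP} --- so the natural comparison is with the paper's proof of its generalization, Theorem \ref{thmHLPgen}. Your direct implication is exactly that argument specialized to $E=F=\mathbb{R}$ and $\lambda_k=1/N$: the subgradient inequality taken at the points $x_k$, Abel summation, nonnegativity of the terms $(\psi_k-\psi_{k+1})D_k$, and the boundary term $\psi_N D_N$ annihilated by equality of the total sums. The one place where dimension one genuinely simplifies matters is the monotonicity $\psi_1\geq\cdots\geq\psi_N$, which you get for free from convexity of $f$; in the vector-valued setting of Theorem \ref{thmHLPgen} this is precisely the isotonicity of $\Phi'$ that must be imposed as a hypothesis (see Remarks \ref{rem4} and \ref{remsuf}), so your proof makes visible exactly which ingredient the paper has to buy separately. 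Your converse --- the test functions $\pm t$ together with the truncations $(t-c)_+$ and the representation $\sum_{i=1}^{k}t_i^{\downarrow}=\min_{c\in\mathbb{R}}\bigl(\sum_{i=1}^{N}(t_i-c)_++kc\bigr)$ --- is correct and standard; the paper proves no converse at all, for this theorem or for its generalizations.

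One small technical repair to the direct half: the slope $\psi_k$, chosen as a right-hand derivative, need not exist when $x_k$ is the right endpoint $b$ of a closed domain interval, where the subdifferential can be empty (e.g. $f(t)=-\sqrt{1-t}$ on $[0,1]$ at $t=1$). But in that case the ordering gives $x_1=\cdots=x_k=b$, and majorization forces $kb=\sum_{i\leq k}x_i\leq\sum_{i\leq k}y_i\leq kb$, hence $y_i=x_i=b$ for all $i\leq k$; these terms cancel from both sides of (\ref{HLPIneq}) and your argument applies verbatim to the remaining indices (the symmetric issue at the left endpoint is handled the same way). This is a standard caveat shared by every textbook proof, not a gap in your reasoning.
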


The inequality (\ref{HLPIneq}) still works when $f$ is a nondecreasing convex
function and $\mathbf{x}\prec_{wHLP}\mathbf{y}.$ This important remark, due
independently to Tomi\'{c} and Weyl, can be derived directly from Theorem
\ref{thmHLP}. See \cite{MOA2011} and \cite{NP2018}.

As we briefly noticed in \cite{NO}, the Hardy-Littlewood-P\'{o}lya inequality
can be extended to the framework of ordered Banach spaces alongside an
argument that can be traced back to \cite{MPP1995}. The aim of the present
paper is to prove much more general results and to show that they are best
possible (that is, no such theorems exist with less restrictions than ours).

The necessary background on ordered Banach spaces can be covered from
\cite{NO}. Additional information is available in the classical books of
Aliprantis and Tourky \cite{AT2007} and Meyer-Nieberg \cite{MN}.

According to Choquet's theory (see \cite{NP2018} and \cite{Ph2001}), the right
framework for developing the majorization theory is that of probability
measures. In the case of the Hardy-Littlewood-P\'{o}lya preorder relation
$\prec_{HLP}~$this can be done simply by identifying each vector
$\mathbf{x}=(x_{1},...,x_{N})$ in $\mathbb{R}^{N}$ with the discrete
probability measure $\left(  1/N\right)  \sum_{k=1}^{N}\delta_{x_{k}}$ acting
on $\mathbb{R};$ as usually $\delta_{x_{k}}$ denotes the Dirac measure
concentrated at $x_{k}$. We put
\[
\frac{1}{N}\sum_{k=1}^{N}\delta_{x_{k}}\prec_{HLP}\frac{1}{N}\sum_{k=1}%
^{N}\delta_{y_{k}}%
\]
with the same understanding as $\mathbf{x}\prec_{HLP}\mathbf{y}.$ Under these
terms, the Hardy-Littlewood-P\'{o}lya inequality of majorization can be
rephrased as%
\[
\mu\prec_{HLP}\nu\text{ if and only if }\int_{I}f\mathrm{d}\mu\leq\int
_{I}f\mathrm{d}\nu
\]
for every real-valued continuous and convex function $f$ whose domain of
definition is an interval $I$ that includes the supports of the discrete
probability measures$\ \mu$ and~$\nu\mathrm{.}\vspace{1mm}$

Since in an ordered Banach space not every string of elements admits a
decreasing rearrangement, in this paper we will concentrate to the case of
pairs of discrete probability measures of which at least one of them is
supported by a monotone string of points. The case where the support of the
left measure consists of a decreasing string is defined as follows.

\begin{definition}
\label{def1}Suppose that $\sum_{k=1}^{N}\lambda_{k}\delta_{\mathbf{x}_{k}}$
and $\sum_{k=1}^{N}\lambda_{k}\delta_{\mathbf{y}_{k}}$ are two discrete Borel
probability measures that act on the ordered Banach space $E$. We say that
$\sum_{k=1}^{N}\lambda_{k}\delta_{\mathbf{x}_{k}}$ is weakly $L^{\downarrow}%
$-majorized by $\sum_{k=1}^{N}\lambda_{k}\delta_{\mathbf{y}_{k}}$
\emph{(}denoted \newline$\sum_{k=1}^{N}\lambda_{k}\delta_{\mathbf{x}_{k}}%
\prec_{wL^{\downarrow}}\sum_{k=1}^{N}\lambda_{k}\delta_{\mathbf{y}_{k}}%
$\emph{)} if the left hand measure is supported by a decreasing string of
points
\begin{equation}
\mathbf{x}_{1}\geq\cdots\geq\mathbf{x}_{N} \label{x}%
\end{equation}
and%
\begin{equation}
\sum_{k=1}^{n}\lambda_{k}\mathbf{x}_{k}\leq\sum_{k=1}^{n}\lambda_{k}%
\mathbf{y}_{k}\quad\text{for all }n\in\{1,\dots,N\}. \label{maj1}%
\end{equation}

We say that $\sum_{k=1}^{N}\lambda_{k}\delta_{\mathbf{x}_{k}}$ is
$L^{\downarrow}$-majorized by $\sum_{k=1}^{N}\lambda_{k}\delta_{\mathbf{y}%
_{k}}$ $($denoted \linebreak$\sum_{k=1}^{N}\lambda_{k}\delta_{\mathbf{x}_{k}%
}\prec_{L^{\downarrow}}\sum_{k=1}^{N}\lambda_{k}\delta_{\mathbf{y}_{k}})$ if
in addition%
\begin{equation}
\sum_{k=1}^{N}\lambda_{k}\mathbf{x}_{k}=\sum_{k=1}^{N}\lambda_{k}%
\mathbf{y}_{k}. \label{maj2}%
\end{equation}

\end{definition}

Notice that the context of Definition \ref{def1} makes necessary that all
weights $\lambda_{1},...,\lambda_{N}$ belong to $(0,1]$ and $\sum_{k=1}%
^{N}\lambda_{k}=1.$

The three conditions (\ref{x}), (\ref{maj1}) and (\ref{maj2}) imply
$\mathbf{y}_{N}\leq\mathbf{x}_{N}\leq\mathbf{x}_{1}\leq$ $\mathbf{y}_{1}$ but
not the ordering $\mathbf{y}_{1}\geq\cdots\geq\mathbf{y}_{N}.$ For example,
when $N=3,$ one may choose
\[
\lambda_{1}=\lambda_{2}=\lambda_{3}=1/3,\text{ }\mathbf{x}_{1}=\mathbf{x}%
_{2}=\mathbf{x}_{3}=\mathbf{x}%
\]
and%
\[
\mathbf{y}_{1}=\mathbf{x},\text{ }\mathbf{y}_{2}=\mathbf{x+z,}\text{
}\mathbf{y}_{3}=\mathbf{x-z}%
\]
where $\mathbf{z}$ is any positive element.

Under these circumstances it is natural to introduce the following companion
to Definition 1, involving the ascending strings of elements as support for
the right hand measure.

\begin{definition}
\label{def2}The relation of\emph{ weak }$R^{\uparrow}$-\emph{majorization},
\[
\sum_{k=1}^{N}\lambda_{k}\delta_{\mathbf{x}_{k}}\prec_{wR^{\uparrow}}%
\sum_{k=1}^{N}\lambda_{k}\delta_{\mathbf{y}_{k}},
\]
between two discrete Borel probability measures means the fulfillment of the
condition $($\emph{\ref{maj1}}$)$ under the presence of the ordering%
\begin{equation}
\mathbf{y}_{1}\leq\cdots\leq\mathbf{y}_{N}; \label{y}%
\end{equation}
assuming in addition the condition $($\emph{\ref{maj2}}$)$\emph{,} we say that
$\sum_{k=1}^{N}\lambda_{k}\delta_{\mathbf{x}_{k}}$ is $R^{\uparrow}$-majorized
by $\sum_{k=1}^{N}\lambda_{k}\delta_{\mathbf{y}_{k}}$ $($denoted $\sum
_{k=1}^{N}\lambda_{k}\delta_{\mathbf{x}_{k}}\prec_{R^{\uparrow}}\sum_{k=1}%
^{N}\lambda_{k}\delta_{\mathbf{y}_{k}})$.
\end{definition}

When every element of $E$ is the difference of two positive elements, the weak
majorization relations $\prec_{wL^{\downarrow}}$and $\prec_{wR^{\uparrow}}$
can be augmented so to obtain majorization relations.

The corresponding extensions of the Hardy-Littlewood-P\'{o}lya inequality of
majorization for $\prec_{wL^{\downarrow}},\prec_{L^{\downarrow}}%
,\prec_{wR^{\uparrow}}$and $\prec_{R^{\uparrow}}$make the objective of two
theorems in Section 4. The first one, Theorem \ref{thmHLPgen}, deals with
G\^{a}teaux differentiable convex functions whose differentials are isotone
(that is, order preserving). The second one, Theorem \ref{thmHLPgen2}, extends
the conclusion of the preceding theorem to a nondifferentiable framework
involving convex functions defined on open $N$-dimensional box of
$\mathbb{R}^{N}$ which verify a condition of monotonicity \`{a} la Popoviciu
\cite{Pop34} (called by us $2$-box monotonicity). This is done via the
approximation Theorem \ref{thmappr}, whose proof makes the objective of
Section 3.

Unlike the case of functions of one real variable, when the isotonicity of the
differential is automatic, for several variables, while this is not
necessarily true in the case of a differentiable convex function of a vector
variable. See Remark \ref{rem4}. Remarkably, the isotonicity of the
differential is not only a sufficient condition for the validity of our
differentiable generalization of the Hardy-Littlewood-P\'{o}lya theorem, but
also a necessary one. See Remark \ref{remsuf}.

For the convenience of the reader we review in Section 2 some very basic
results concerning the various classes of convex or convex like functions and
the gradient inequalities they generate. This section also includes several
significant examples of differentiable convex functions with isotone differentials.

Not entirely surprising, the inequalities of majorization may occur outside
the class of convex functions. This is illustrated by Theorem \ref{thmHLPgen2}%
, that deals with the case of strongly smooth functions.

Applications of the above majorization theorems include the isotonicity of
Jensen's gap, a general form of the parallelogram law and also the extension
of several classical inequalities to the setting of convex functions of a
vector variable. They are all presented in Section 5.

\section{Classes of convex functions}

In what follows $E$ and $F$ are two ordered Banach spaces and $\Phi
:C\rightarrow F$ is a function defined on a convex subset of $E.$

The function $\Phi$ is said to be a \emph{perturbed} \emph{convex function}
\emph{with modulus} $\omega:\mathbb{[}0,\infty\mathbb{)}\rightarrow F$
(abbreviated, $\omega$-\emph{convex function}) if it verifies an estimate of
the form
\begin{equation}
\Phi((1-\lambda)\mathbf{x}+\lambda\mathbf{y})\leq(1-\lambda)\Phi
(\mathbf{x})+\lambda\Phi(\mathbf{y})-\lambda(1-\lambda)\omega\left(
\left\Vert \mathbf{x}-\mathbf{y}\right\Vert \right)  ,\text{\quad}
\label{omega_conv}%
\end{equation}
for all $\mathbf{x},\mathbf{y}$ in $C$ and $\lambda\in(0,1).$ The usual convex
functions represent the particular case when $\omega$ is identically 0. Every
$\omega$-convex function associated to a modulus\emph{ }$\omega\geq0$ is
necessarily convex. When $\omega$ is allowed to take negative values, there
are $\omega$-convex functions which are not convex. See the case of semiconvex
functions, described below.

The $\omega$-convex functions whose moduli $\omega$ are strictly positive
except at the origin (where $\omega(0)=0)$ are usually called \emph{uniformly
convex}.\emph{ }In their case the inequality (\ref{omega_conv}) is strict
whenever $\mathbf{x}\neq\mathbf{y}$ and $\lambda\in(0,1).$\emph{ }More
information on this important class of convex functions is available in
\cite{AP}, \cite{BGHV}, \cite{BV}, \cite{Z1983} and \cite{Zal2002}.

By changing $\Phi$ to $-\Phi$ one obtain the notions of $\omega$-\emph{concave
function }and\emph{ uniformly concave function. }

\begin{remark}
\label{rem1}Much of the study of perturbed convex functions with vector values
can be reduced to that of real-valued functions.\ Indeed, in any ordered
Banach space $F$ with generating cone, any inequality of the form$\ u\leq v$
is equivalent to $w^{\ast}(u)\leq w^{\ast}(v)$ for all $w^{\ast}\in
F_{+}^{\ast}.$ See \emph{\cite{NO}}, Lemma $1$ $(c)$. As a consequence, a
function $\Phi:C\rightarrow F$ is $\omega$-convex if and only if $w^{\ast
}\circ\Phi$ is $(w^{\ast}\circ\omega)$-convex whenever $w^{\ast}\in
F_{+}^{\ast}.$
\end{remark}

There are several variants of convexity that play a prominent role in convex
optimization, calculus of variations, isoperimetric inequalities,
Monge--Kantorovich theory of transport etc. Some of them are mentioned in what follows.

A real-valued function $\Phi$ defined on a convex subset $C$ of $\mathbb{R}%
^{N}$ is called $\alpha$-\emph{strongly convex} functions (that is, strongly
convex with parameter $\alpha>0)$ if $\Phi-$ $(\alpha/2)\left\Vert
\cdot\right\Vert ^{2}$ is convex. The function $\Phi$ is called $\beta
$-\emph{semiconvex} (that is, semiconvex with parameter $\beta>0)$ if it
becomes convex after the addition of $(\beta/2)\left\Vert \cdot\right\Vert
^{2}.$ Equivalently, these are the functions that verify respectively
estimates of the form%
\begin{equation}
\Phi((1-\lambda)\mathbf{x}+\lambda\mathbf{y})\leq(1-\lambda)\Phi
(\mathbf{x})+\lambda\Phi(\mathbf{y})-\frac{1}{2}\lambda(1-\lambda
)\alpha\left\Vert \mathbf{x}-\mathbf{y}\right\Vert ^{2}
\label{M_strongly conv}%
\end{equation}
and
\begin{equation}
\Phi((1-\lambda)\mathbf{x}+\lambda\mathbf{y})\leq(1-\lambda)\Phi
(\mathbf{x})+\lambda\Phi(\mathbf{y})+\frac{1}{2}\lambda(1-\lambda
)\beta\left\Vert \mathbf{x}-\mathbf{y}\right\Vert ^{2},\text{\quad}
\label{L_semiconv}%
\end{equation}
for all $\mathbf{x},\mathbf{y}$ in $C$ and $\lambda\in(0,1)$. By changing
$\Phi$ to $-\Phi$ one obtain the notions of $\alpha$-\emph{strong concavity}
and $\beta$-\emph{semiconcavity}.

Under the presence of G\^{a}teaux differentiability, each of the above classes
of functions generates specific gradient inequalities that play a prominent
role in our generalization of the Hardy-Littlewood-P\'{o}lya inequality of majorization.

\begin{lemma}
\label{lem2}Suppose that $C$ is an open convex subset of $E$ and
$\Phi:C\rightarrow F$ is a function both G\^{a}teaux differentiable and
$\omega$-convex. Then\emph{ }
\begin{equation}
\Phi(\mathbf{x})-\Phi(\mathbf{a})\geq\Phi^{\prime}(\mathbf{a})(\mathbf{x}%
-\mathbf{a})+\omega\left(  \left\Vert \mathbf{x}-\mathbf{a}\right\Vert
\right)  \label{grad_ineq}%
\end{equation}
for all points $\mathbf{a}\in C$ and $\mathbf{x}\in C$.
\end{lemma}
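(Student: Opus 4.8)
The plan is to reduce the vector-valued statement to a scalar one via Remark~\ref{rem1}, then prove the scalar gradient inequality by a standard one-variable argument. First I would fix an arbitrary functional $w^{\ast}\in F_{+}^{\ast}$ and set $\varphi = w^{\ast}\circ\Phi$. By Remark~\ref{rem1}, since $\Phi$ is $\omega$-convex, the real-valued function $\varphi$ is $(w^{\ast}\circ\omega)$-convex; moreover $\varphi$ is G\^{a}teaux differentiable with $\varphi'(\mathbf{a}) = w^{\ast}\circ\Phi'(\mathbf{a})$, because $w^{\ast}$ is a bounded linear functional and differentiation commutes with it. It therefore suffices to establish the scalar inequality
\begin{equation}
\varphi(\mathbf{x}) - \varphi(\mathbf{a}) \geq \varphi'(\mathbf{a})(\mathbf{x}-\mathbf{a}) + (w^{\ast}\circ\omega)\left(\left\Vert \mathbf{x}-\mathbf{a}\right\Vert\right), \label{scalar_grad}
\end{equation}
since by Lemma~1(c) of \cite{NO} the family of all such $w^{\ast}$ separates points and the cone is generating, so establishing \eqref{scalar_grad} for every $w^{\ast}\in F_{+}^{\ast}$ yields \eqref{grad_ineq}.

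For the scalar inequality I would restrict $\varphi$ to the segment joining $\mathbf{a}$ and $\mathbf{x}$. Applying the defining $\omega$-convexity estimate \eqref{omega_conv} (transported to $\varphi$) to the points $\mathbf{a}$ and $\mathbf{x}$ with weight $\lambda$ gives
\begin{equation}
\varphi\bigl((1-\lambda)\mathbf{a}+\lambda\mathbf{x}\bigr) \leq (1-\lambda)\varphi(\mathbf{a}) + \lambda\varphi(\mathbf{x}) - \lambda(1-\lambda)(w^{\ast}\circ\omega)\left(\left\Vert \mathbf{x}-\mathbf{a}\right\Vert\right). \nonumber
\end{equation}
Rearranging this into a difference quotient, I would write it as
\begin{equation}
\frac{\varphi(\mathbf{a}+\lambda(\mathbf{x}-\mathbf{a})) - \varphi(\mathbf{a})}{\lambda} \leq \varphi(\mathbf{x}) - \varphi(\mathbf{a}) - (1-\lambda)(w^{\ast}\circ\omega)\left(\left\Vert \mathbf{x}-\mathbf{a}\right\Vert\right). \nonumber
\end{equation}
The left-hand side is precisely the difference quotient whose limit as $\lambda\downarrow 0$ is the directional (G\^{a}teaux) derivative $\varphi'(\mathbf{a})(\mathbf{x}-\mathbf{a})$. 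Letting $\lambda\downarrow 0$ in both sides then yields \eqref{scalar_grad}, because the term $(1-\lambda)$ tends to $1$.

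The main obstacle, and the point deserving care rather than routine computation, is justifying the passage to the limit: I must confirm that the left-hand difference quotient genuinely converges to $\varphi'(\mathbf{a})(\mathbf{x}-\mathbf{a})$, which is exactly the definition of G\^{a}teaux differentiability of $\varphi$ in the direction $\mathbf{x}-\mathbf{a}$, and that $w^{\ast}\circ\omega$ is continuous enough (or at least that the constant term survives the limit, which it does since it is independent of $\lambda$ apart from the factor $1-\lambda$). A subtle point is that $\omega$ takes values in $F$, so $w^{\ast}\circ\omega$ is real-valued and the scalar machinery applies cleanly; one only needs that $w^{\ast}$ is order preserving on the cone to preserve the direction of the inequality when reconstituting \eqref{grad_ineq} from \eqref{scalar_grad}. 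No convexity-to-continuity regularity argument is needed here because G\^{a}teaux differentiability is assumed outright, which is what makes the difference-quotient limit legitimate.
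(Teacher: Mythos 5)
The paper states Lemma \ref{lem2} without proof (it is the standard gradient inequality adapted to the $\omega$-convex setting), so your argument can only be measured against the natural direct proof. Your proof is correct: the difference-quotient computation on the segment from $\mathbf{a}$ to $\mathbf{x}$ is exactly the right core, and your passage to the limit is legitimate for the reasons you give. What differs is the scalarization wrapper, and it is worth noting two things about it. First, Remark \ref{rem1} is stated for spaces $F$ whose positive cone is generating, a hypothesis that does not appear in Lemma \ref{lem2}; as written, your reduction imports an assumption the statement does not make. The implication you actually need --- recovering $u\leq v$ in $F$ from $w^{\ast}(u)\leq w^{\ast}(v)$ for all $w^{\ast}\in F_{+}^{\ast}$ --- holds in any ordered Banach space, because the positive cone is norm-closed and convex and one can apply Hahn--Banach separation; so this gap is cosmetic and repairable, but you should have flagged it. Second, the detour through $F_{+}^{\ast}$ is unnecessary: your difference-quotient argument runs verbatim in $F$ itself. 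From the $\omega$-convexity estimate one gets, for $\lambda\in(0,1)$,
\begin{equation*}
\frac{1}{\lambda}\left(  \Phi(\mathbf{a}+\lambda(\mathbf{x}-\mathbf{a}
))-\Phi(\mathbf{a})\right)  \leq\Phi(\mathbf{x})-\Phi(\mathbf{a}
)-(1-\lambda)\,\omega\left(  \left\Vert \mathbf{x}-\mathbf{a}\right\Vert
\right)  ,
\end{equation*}
since multiplication by the positive scalar $1/\lambda$ preserves the order. The left-hand side converges in norm to $\Phi^{\prime}(\mathbf{a})(\mathbf{x}-\mathbf{a})$ by G\^{a}teaux differentiability, the right-hand side converges trivially, and because $F_{+}$ is closed the inequality survives the limit. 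This direct version needs no duality argument and no hypothesis on the cone of $F$ beyond closedness, which is part of the definition of an ordered Banach space; it is the proof the paper implicitly has in mind.
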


As is well known, if $C$ is an open convex subset of $\mathbb{R}^{N},$ then a
twice continuously differentiable function $\Phi:C\rightarrow\mathbb{R}$ is
$\alpha$-strongly convex (respectively $\beta$-semiconvex) if and only if its
Hessian matrix verifies the inequality $\nabla^{2}\Phi\geq\alpha I$
(respectively $\nabla^{2}\Phi\geq-\beta I).$ However, valuable
characterizations are possible with less smoothness.

A continuously differentiable function $\Phi:C\rightarrow\mathbb{R}$ defined
on an open convex subset of $\mathbb{R}^{N}$ is said to be $\sigma
$-\emph{strongly} \emph{smooth} if its gradient is $\sigma$-Lipschitz, that
is,%
\[
\left\Vert \nabla\Phi(\mathbf{x})-\nabla\Phi(\mathbf{y})\right\Vert \leq
\sigma\left\Vert \mathbf{x}-\mathbf{y}\right\Vert \text{\quad for all
}\mathbf{x},\mathbf{y}\in C.
\]

Notice that every $\sigma$-strongly smooth function $\Phi$ verifies the
following variant of the gradient inequality:
\begin{equation}
\Phi(\mathbf{y})-\Phi(\mathbf{x})\leq\Phi^{\prime}(\mathbf{x})(\mathbf{y}%
-\mathbf{x})+\frac{1}{2}\sigma\left\Vert \mathbf{y}-\mathbf{x}\right\Vert ^{2}
\label{smooth_grad}%
\end{equation}
for all $\mathbf{x},\mathbf{y}$ in $C.$ See \cite{Bub}, Lemma 3.4, p. 267.

\begin{lemma}
\label{lem3}If $\Phi$ is simultaneously convex and $\sigma$-strongly smooth,
then%
\[
\frac{1}{2\sigma}\left\Vert \Phi^{\prime}(\mathbf{y})-\Phi^{\prime}%
(\mathbf{x})\right\Vert \leq\Phi(\mathbf{y})-\Phi(\mathbf{x})-\Phi^{\prime
}(\mathbf{x})(\mathbf{y}-\mathbf{x})\leq\frac{1}{2}\sigma\left\Vert
\mathbf{y}-\mathbf{x}\right\Vert ^{2}.
\]

\end{lemma}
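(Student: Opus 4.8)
The plan is to establish the two inequalities by entirely different mechanisms: the right-hand bound is a mere rearrangement of the smooth gradient inequality and uses only $\sigma$-strong smoothness, whereas the left-hand (\emph{co-coercivity}) bound is where convexity becomes indispensable.

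The upper estimate is immediate. Subtracting $\Phi(\mathbf{x})$ and the linear term $\Phi^{\prime}(\mathbf{x})(\mathbf{y}-\mathbf{x})$ from both sides of the inequality (\ref{smooth_grad}) gives
\[
\Phi(\mathbf{y})-\Phi(\mathbf{x})-\Phi^{\prime}(\mathbf{x})(\mathbf{y}-\mathbf{x})\leq\frac{1}{2}\sigma\left\Vert \mathbf{y}-\mathbf{x}\right\Vert ^{2},
\]
with no appeal to convexity whatsoever.

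For the lower bound I would freeze $\mathbf{x}$ and pass to the auxiliary function $g(\mathbf{z})=\Phi(\mathbf{z})-\Phi^{\prime}(\mathbf{x})(\mathbf{z})$. Because only a linear term has been subtracted, $g$ remains convex and remains $\sigma$-strongly smooth, its gradient being $\nabla\Phi-\nabla\Phi(\mathbf{x})$; crucially $\nabla g(\mathbf{x})=0$, so convexity forces $\mathbf{x}$ to be a global minimizer of $g$. Evaluating the smooth gradient inequality (\ref{smooth_grad}) for $g$ at the base point $\mathbf{y}$ and the shifted competitor $\mathbf{w}=\mathbf{y}-\sigma^{-1}\nabla g(\mathbf{y})$, and then invoking $g(\mathbf{x})\leq g(\mathbf{w})$, yields after completing the square
\[
g(\mathbf{x})\leq g(\mathbf{y})-\frac{1}{2\sigma}\left\Vert \nabla g(\mathbf{y})\right\Vert ^{2}.
\]
Rewriting $g(\mathbf{y})-g(\mathbf{x})=\Phi(\mathbf{y})-\Phi(\mathbf{x})-\Phi^{\prime}(\mathbf{x})(\mathbf{y}-\mathbf{x})$ and $\nabla g(\mathbf{y})=\nabla\Phi(\mathbf{y})-\nabla\Phi(\mathbf{x})=\Phi^{\prime}(\mathbf{y})-\Phi^{\prime}(\mathbf{x})$ then delivers the asserted lower bound $\tfrac{1}{2\sigma}\left\Vert \Phi^{\prime}(\mathbf{y})-\Phi^{\prime}(\mathbf{x})\right\Vert ^{2}\leq\Phi(\mathbf{y})-\Phi(\mathbf{x})-\Phi^{\prime}(\mathbf{x})(\mathbf{y}-\mathbf{x})$.

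The step I expect to be delicate is the admissibility of the shifted competitor $\mathbf{w}=\mathbf{y}-\sigma^{-1}\nabla g(\mathbf{y})$: when $C$ is a proper open convex subset of $\mathbb{R}^{N}$ this point may fall outside $C$, so the descent inequality cannot be applied there verbatim. I would circumvent this either by working under the hypothesis (implicit in the setting of \cite{Bub}) that $\Phi$ is $\sigma$-strongly smooth on all of $\mathbb{R}^{N}$, or by carrying out the minimization along the segment issuing from $\mathbf{y}$ in the direction $-\nabla g(\mathbf{y})$ and using a continuity argument to keep the competitor inside $C$. Once the competitor is admissible, the remaining manipulations are the routine completion of a square.
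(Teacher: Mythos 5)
Your proof is correct and is in substance identical to the paper's: the right-hand estimate is treated exactly as in the paper (a rearrangement of (\ref{smooth_grad})), and your argument for the left-hand estimate --- the auxiliary function $g(\mathbf{z})=\Phi(\mathbf{z})-\Phi^{\prime}(\mathbf{x})(\mathbf{z})$, the observation that $\mathbf{x}$ is a global minimizer of $g$, and the descent step $\mathbf{w}=\mathbf{y}-\sigma^{-1}\nabla g(\mathbf{y})$ --- is precisely the standard proof of Lemma 3.5 of \cite{Bub}, which is all the paper cites at this point. Two remarks are worth adding. First, what you obtain on the left is $\frac{1}{2\sigma}\left\Vert \Phi^{\prime}(\mathbf{y})-\Phi^{\prime}(\mathbf{x})\right\Vert ^{2}$, with the norm squared; the lemma as printed omits the exponent $2$, but that is a misprint in the paper rather than a defect of your argument: the unsquared inequality cannot hold in general, since replacing $\Phi$ by $c\Phi$ and $\sigma$ by $c\sigma$ leaves the unsquared left side unchanged while the middle term scales by $c$, so the squared form (which is also what \cite{Bub} states) is the correct one. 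Second, the admissibility issue you flag for the competitor $\mathbf{w}$ is genuine and is inherited, silently, by the paper's one-line citation: Lemma 3.5 of \cite{Bub} concerns functions defined on the whole space, and on a proper open convex subset $C$ of $\mathbb{R}^{N}$ the descent point may leave $C$, so the argument does not apply verbatim there. Your first remedy (work with $\Phi$ that is $\sigma$-strongly smooth on all of $\mathbb{R}^{N}$) is the right one; the alternative of minimizing along the segment clipped to $C$ and invoking continuity is too vague as stated, since truncating the descent step does not obviously recover the full constant $\frac{1}{2\sigma}$.
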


\begin{proof}
The left-hand side inequality is just Lemma 3.5 in \cite{Bub}, p. 268, while
the right-hand side inequality is a restatement of the inequality
(\ref{smooth_grad}).
\end{proof}

An important source of strongly smooth convex functions is offered by the
following result:

\begin{lemma}
\label{lem4}If $\Phi$ is an $\alpha$-strongly convex function, then its
Legendre-Fenchel conjugate%
\[
\Phi^{\ast}(\mathbf{x}^{\ast})=\sup\left\{  \mathbf{x}^{\ast}(\mathbf{x}%
)-\Phi(\mathbf{x}):\mathbf{x}\in C\right\}
\]
is an $(1/\alpha)$-strongly smooth function and also a convex function. In
particular, $\Phi^{\ast}$ is defined and differentiable on the whole dual
space $E^{\ast}.$
\end{lemma}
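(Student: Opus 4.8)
The plan is to treat the three assertions --- convexity of $\Phi^{\ast}$, its differentiability on all of $E^{\ast}$, and the $(1/\alpha)$-Lipschitz bound on its gradient --- essentially separately, relying throughout on the observation that $\alpha$-strong convexity is precisely $\omega$-convexity with modulus $\omega(t)=(\alpha/2)t^{2}$, so that Lemma \ref{lem2} supplies the quadratic gradient inequality
\begin{equation}
\Phi(\mathbf{x})\geq\Phi(\mathbf{a})+\Phi^{\prime}(\mathbf{a})(\mathbf{x}-\mathbf{a})+\frac{\alpha}{2}\left\Vert \mathbf{x}-\mathbf{a}\right\Vert ^{2}\label{quadgrad}
\end{equation}
for all $\mathbf{a},\mathbf{x}\in C$. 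Convexity of $\Phi^{\ast}$ is immediate and needs no hypothesis on $\Phi$: it is a pointwise supremum of the affine functions $\mathbf{x}^{\ast}\mapsto\mathbf{x}^{\ast}(\mathbf{x})-\Phi(\mathbf{x})$, hence convex and lower semicontinuous.

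Next I would settle finiteness and attainment. Fixing any base point $\mathbf{a}$ and inserting (\ref{quadgrad}) into the defining supremum shows that, for each $\mathbf{x}^{\ast}\in E^{\ast}$,
\[
\mathbf{x}^{\ast}(\mathbf{x})-\Phi(\mathbf{x})\leq\mathbf{x}^{\ast}(\mathbf{x})-\Phi(\mathbf{a})-\Phi^{\prime}(\mathbf{a})(\mathbf{x}-\mathbf{a})-\frac{\alpha}{2}\left\Vert \mathbf{x}-\mathbf{a}\right\Vert ^{2},
\]
whose right-hand side is a concave quadratic in $\mathbf{x}$ that tends to $-\infty$ as $\left\Vert \mathbf{x}\right\Vert \to\infty$. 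Thus $\Phi^{\ast}(\mathbf{x}^{\ast})$ is finite, and since $E=\mathbb{R}^{N}$ is finite-dimensional the coercive upper bound confines any maximizing sequence to a compact set, so the supremum is attained at some point $\mathbf{u}=\mathbf{u}(\mathbf{x}^{\ast})$. Strict concavity of the objective (a consequence of the strict inequality in (\ref{quadgrad}) for $\mathbf{x}\neq\mathbf{a}$) forces this maximizer to be unique, and the first-order optimality condition reads $\Phi^{\prime}(\mathbf{u})=\mathbf{x}^{\ast}$.

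The differentiability of $\Phi^{\ast}$ with $\nabla\Phi^{\ast}(\mathbf{x}^{\ast})=\mathbf{u}(\mathbf{x}^{\ast})$ then follows from the envelope principle for Legendre transforms: the convex function $\Phi^{\ast}$ has the affine minorant $\mathbf{y}^{\ast}\mapsto\mathbf{y}^{\ast}(\mathbf{u})-\Phi(\mathbf{u})$, which touches its graph at $\mathbf{x}^{\ast}$, so $\mathbf{u}$ is a subgradient of $\Phi^{\ast}$ at $\mathbf{x}^{\ast}$; uniqueness of the maximizer reduces the subdifferential to a singleton, which for a finite convex function is equivalent to differentiability there. Finally, to obtain the Lipschitz bound I would exploit the strong monotonicity of $\Phi^{\prime}$: writing (\ref{quadgrad}) twice, once with $(\mathbf{a},\mathbf{x})=(\mathbf{u},\mathbf{v})$ and once with $(\mathbf{a},\mathbf{x})=(\mathbf{v},\mathbf{u})$, and adding, gives
\[
\left(\Phi^{\prime}(\mathbf{u})-\Phi^{\prime}(\mathbf{v})\right)(\mathbf{u}-\mathbf{v})\geq\alpha\left\Vert \mathbf{u}-\mathbf{v}\right\Vert ^{2}.
\]
Substituting $\mathbf{x}^{\ast}=\Phi^{\prime}(\mathbf{u})$, $\mathbf{y}^{\ast}=\Phi^{\prime}(\mathbf{v})$ and bounding the left-hand pairing by $\left\Vert \mathbf{x}^{\ast}-\mathbf{y}^{\ast}\right\Vert \left\Vert \mathbf{u}-\mathbf{v}\right\Vert$ yields $\left\Vert \mathbf{u}-\mathbf{v}\right\Vert \leq(1/\alpha)\left\Vert \mathbf{x}^{\ast}-\mathbf{y}^{\ast}\right\Vert$, that is, $\nabla\Phi^{\ast}$ is $(1/\alpha)$-Lipschitz, as claimed.

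The step I expect to be the main obstacle is the identification $\nabla\Phi^{\ast}(\mathbf{x}^{\ast})=\mathbf{u}(\mathbf{x}^{\ast})$: everything downstream rests on it, and making it rigorous requires carefully linking attainment, uniqueness of the maximizer, and the passage from a single-valued subdifferential to genuine differentiability --- the one place where the finite-dimensionality of $E$ and the strictness built into strong convexity are both genuinely used.
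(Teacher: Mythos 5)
The paper itself offers no proof of Lemma \ref{lem4} --- it simply cites Shalev-Shwartz (Lemma 15) and Kakade--Shalev-Shwartz--Tewari for the converse --- so your argument must stand on its own; it follows the standard finite-dimensional route, but it has two gaps, one repairable and one genuine. The repairable one: you derive everything from the quadratic gradient inequality of Lemma \ref{lem2} and from the stationarity condition $\Phi^{\prime}(\mathbf{u})=\mathbf{x}^{\ast}$, both of which presuppose that $\Phi$ is G\^{a}teaux differentiable, whereas Lemma \ref{lem4} assumes only that $\Phi-(\alpha/2)\left\Vert \cdot\right\Vert ^{2}$ is convex. This can be patched: at every relative interior point of $C$ the convex function $\Phi$ has subgradients, the quadratic gradient inequality holds with $\Phi^{\prime}(\mathbf{a})$ replaced by any subgradient, and your final strong-monotonicity computation is insensitive to that replacement.

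The genuine obstruction is the attainment step. The supremum defining $\Phi^{\ast}$ runs over $C$, which is an arbitrary convex subset of $\mathbb{R}^{N}$ --- not assumed closed --- and $\Phi$ need not be lower semicontinuous; coercivity confines a maximizing sequence to a compact set, but its limit may lie on the boundary of $C$, outside $C$. Concretely, take $C=(0,1)$, $\Phi(x)=x^{2}$ (which is $2$-strongly convex) and $x^{\ast}=-1$: the objective $-x-x^{2}$ is strictly decreasing on $(0,1)$, its supremum $0$ is approached only as $x\rightarrow0\notin C$, and no maximizer $\mathbf{u}(\mathbf{x}^{\ast})$ exists; moreover, even when a maximizer does exist at a boundary point belonging to $C$, the condition $\Phi^{\prime}(\mathbf{u})=\mathbf{x}^{\ast}$ fails (only a variational inequality holds there). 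Since, as you note yourself, the identification $\nabla\Phi^{\ast}(\mathbf{x}^{\ast})=\mathbf{u}(\mathbf{x}^{\ast})$ is the pivot on which everything downstream rests, the proof as written collapses exactly at its critical point. The conclusion of the lemma is nevertheless true (in the example above $\Phi^{\ast}$ is differentiable with $(1/2)$-Lipschitz derivative), and the standard repair is to replace $\Phi$ by the lower semicontinuous convex hull $\Psi$ of the function equal to $\Phi$ on $C$ and $+\infty$ off $C$: then $\Psi$ is proper, lsc, still $\alpha$-strongly convex, and $\Psi^{\ast}=\Phi^{\ast}$; for $\Psi$ the supremum \emph{is} attained (upper semicontinuous coercive objective), optimality reads $\mathbf{x}^{\ast}\in\partial\Psi(\mathbf{u})$, uniqueness follows from strict concavity as you argued, and $\partial\Phi^{\ast}(\mathbf{x}^{\ast})=\{\mathbf{u}\}$ follows via the biconjugation $\Psi^{\ast\ast}=\Psi$. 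Run on $\Psi$, your argument becomes correct; run on $\Phi$, the key object $\mathbf{u}(\mathbf{x}^{\ast})$ may simply not exist.
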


For details, see \cite{ShSh}, Lemma 15, p. 126. The converse also works. See
\cite{KST}, Theorem 6.

The connection between $\sigma$-strong smoothness and semiconvexity is
outlined by the following theorem.

\begin{theorem}
$(a)$ Suppose that $C$ is an open convex subset of $\mathbb{R}^{N}.$ If
$\Phi:C\rightarrow\mathbb{R}$ is a $\sigma$-strongly smooth function, then
$\Phi+\frac{\sigma}{2}\left\Vert \cdot\right\Vert ^{2}$ is convex and
$\Phi-\frac{\sigma}{2}\left\Vert \cdot\right\Vert ^{2}$ is concave.

$(b)$ Conversely, if $\Phi:C\rightarrow\mathbb{R}$ is a function
simultaneously semiconvex and semiconcave with parameter $\sigma>0,$ then
$\Phi$ is $\sigma$-strongly smooth.
\end{theorem}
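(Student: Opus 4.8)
The plan is to run both halves through the single two-sided quadratic estimate
\[
-\tfrac{\sigma}{2}\Vert \mathbf{y}-\mathbf{x}\Vert ^{2}\leq \Phi(\mathbf{y})-\Phi(\mathbf{x})-\Phi^{\prime}(\mathbf{x})(\mathbf{y}-\mathbf{x})\leq \tfrac{\sigma}{2}\Vert \mathbf{y}-\mathbf{x}\Vert ^{2},
\]
which is the natural bridge between $\sigma$-strong smoothness and the pair semiconvexity/semiconcavity. For part $(a)$ I would start from the inequality (\ref{smooth_grad}). Since $\nabla(-\Phi)=-\nabla\Phi$ is again $\sigma$-Lipschitz, applying (\ref{smooth_grad}) to both $\Phi$ and $-\Phi$ delivers the full two-sided estimate above. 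To prove that $g:=\Phi+\frac{\sigma}{2}\Vert\cdot\Vert^{2}$ is convex it then suffices, $g$ being differentiable on the open convex set $C$, to verify the gradient inequality $g(\mathbf{y})\geq g(\mathbf{x})+g^{\prime}(\mathbf{x})(\mathbf{y}-\mathbf{x})$. Using $\nabla\!\left(\frac{\sigma}{2}\Vert\cdot\Vert^{2}\right)(\mathbf{x})=\sigma\mathbf{x}$ together with the identity $\frac{\sigma}{2}\Vert\mathbf{y}\Vert^{2}-\frac{\sigma}{2}\Vert\mathbf{x}\Vert^{2}-\sigma\langle\mathbf{x},\mathbf{y}-\mathbf{x}\rangle=\frac{\sigma}{2}\Vert\mathbf{y}-\mathbf{x}\Vert^{2}$, the left-hand inequality of the estimate gives exactly $g(\mathbf{y})-g(\mathbf{x})-g^{\prime}(\mathbf{x})(\mathbf{y}-\mathbf{x})\geq0$. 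The concavity of $\Phi-\frac{\sigma}{2}\Vert\cdot\Vert^{2}$ follows symmetrically from the right-hand inequality.

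For part $(b)$ the work splits into showing that $\Phi$ is $C^{1}$ and that $\nabla\Phi$ is $\sigma$-Lipschitz. I would first recover differentiability from subdifferentiability. Writing $f_{+}=\Phi+\frac{\sigma}{2}\Vert\cdot\Vert^{2}$ (convex) and $f_{-}=\Phi-\frac{\sigma}{2}\Vert\cdot\Vert^{2}$ (concave), pick at a point $\mathbf{x}_{0}$ a subgradient of $f_{+}$ and a supergradient of $f_{-}$; after the quadratic translation used in part $(a)$ these become a lower support $\Phi(\mathbf{y})\geq\Phi(\mathbf{x}_{0})+\langle\mathbf{q},\mathbf{y}-\mathbf{x}_{0}\rangle-\frac{\sigma}{2}\Vert\mathbf{y}-\mathbf{x}_{0}\Vert^{2}$ and an upper support $\Phi(\mathbf{y})\leq\Phi(\mathbf{x}_{0})+\langle\mathbf{s},\mathbf{y}-\mathbf{x}_{0}\rangle+\frac{\sigma}{2}\Vert\mathbf{y}-\mathbf{x}_{0}\Vert^{2}$. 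Subtracting and letting $\mathbf{y}\to\mathbf{x}_{0}$ along rays forces $\mathbf{q}=\mathbf{s}$, and the resulting sandwich $\left\vert\Phi(\mathbf{y})-\Phi(\mathbf{x}_{0})-\langle\mathbf{q},\mathbf{y}-\mathbf{x}_{0}\rangle\right\vert\leq\frac{\sigma}{2}\Vert\mathbf{y}-\mathbf{x}_{0}\Vert^{2}$ yields Fréchet differentiability with $\nabla\Phi(\mathbf{x}_{0})=\mathbf{q}$; continuity of $\nabla\Phi$ is automatic because it equals the (continuous) gradient of the convex function $f_{+}$ minus $\sigma\,\mathrm{id}$.

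Having $\Phi$ differentiable, both $f_{+}$ and $\sigma\Vert\cdot\Vert^{2}-f_{+}=-f_{-}$ are convex and differentiable. The gradient inequality for $-f_{-}$ rearranges into the descent estimate $f_{+}(\mathbf{y})\leq f_{+}(\mathbf{x})+\langle\nabla f_{+}(\mathbf{x}),\mathbf{y}-\mathbf{x}\rangle+\sigma\Vert\mathbf{y}-\mathbf{x}\Vert^{2}$, which together with convexity of $f_{+}$ is precisely the input for the co-coercivity bound of Lemma \ref{lem3} type with constant $2\sigma$, namely $\frac{1}{2\sigma}\Vert\nabla f_{+}(\mathbf{x})-\nabla f_{+}(\mathbf{y})\Vert^{2}\leq\langle\nabla f_{+}(\mathbf{x})-\nabla f_{+}(\mathbf{y}),\mathbf{x}-\mathbf{y}\rangle$. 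Substituting $\nabla f_{+}=\nabla\Phi+\sigma\,\mathrm{id}$ and expanding, the cross terms cancel and one is left with $\Vert\nabla\Phi(\mathbf{x})-\nabla\Phi(\mathbf{y})\Vert^{2}\leq\sigma^{2}\Vert\mathbf{x}-\mathbf{y}\Vert^{2}$, the desired Lipschitz bound.

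The hard part will be establishing co-coercivity on an \emph{open} convex set rather than on all of $\mathbb{R}^{N}$: the standard proof minimizes the Bregman function of $f_{+}$ by taking a gradient step, and that auxiliary point can fall outside $C$. I would resolve this by a local-to-global argument. By continuity of $\nabla\Phi$, for $\mathbf{x},\mathbf{y}$ in a small enough ball the minimizing point stays in $C$, so the completed-square inequality holds there and yields $\Vert\nabla\Phi(\mathbf{x})-\nabla\Phi(\mathbf{y})\Vert\leq\sigma\Vert\mathbf{x}-\mathbf{y}\Vert$ locally. Then, for arbitrary $\mathbf{x},\mathbf{y}\in C$, the segment $[\mathbf{x},\mathbf{y}]$ lies in $C$ and is compact; subdividing it into finitely many collinear pieces short enough for the local bound and summing, the $\Vert\mathbf{p}_{i}-\mathbf{p}_{i-1}\Vert$ add up exactly to $\Vert\mathbf{x}-\mathbf{y}\Vert$, so the global Lipschitz constant remains $\sigma$.
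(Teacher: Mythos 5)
Your proof is correct, and it takes a genuinely different route from the paper, because the paper does not prove this theorem at all: it refers the reader to Cannarsa and Sinestrari \cite{CS2004}, deducing assertion $(a)$ from their Proposition 2.1.2 and noting that assertion $(b)$ is \emph{motivated by} their Corollary 3.3.8. For $(a)$ the difference is cosmetic: applying (\ref{smooth_grad}) to both $\Phi$ and $-\Phi$ (legitimate, since $-\Phi$ has the same Lipschitz gradient bound) and absorbing the quadratic through the identity $\frac{\sigma}{2}\Vert\mathbf{y}\Vert^{2}-\frac{\sigma}{2}\Vert\mathbf{x}\Vert^{2}-\sigma\langle\mathbf{x},\mathbf{y}-\mathbf{x}\rangle=\frac{\sigma}{2}\Vert\mathbf{y}-\mathbf{x}\Vert^{2}$ is exactly the content of the cited proposition. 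For $(b)$ your route buys something real: the hedged wording ``motivated by'' reflects the fact that the cited corollary gives $C^{1,1}$ regularity of functions that are simultaneously semiconvex and semiconcave, but not literally the statement proved here, namely the \emph{global} Lipschitz bound on the convex set $C$ with the \emph{sharp} constant $\sigma$. Your argument delivers precisely that: the subgradient/supergradient sandwich forces Fr\'{e}chet differentiability with the two support vectors equal (your ray argument is sound), and the Baillon--Haddad-type co-coercivity bound for $f_{+}=\Phi+\frac{\sigma}{2}\Vert\cdot\Vert^{2}$, after the substitution $\nabla f_{+}=\nabla\Phi+\sigma\,\mathrm{id}$ and the cancellation of cross terms (which I have checked), yields $\Vert\nabla\Phi(\mathbf{x})-\nabla\Phi(\mathbf{y})\Vert\leq\sigma\Vert\mathbf{x}-\mathbf{y}\Vert$ with no loss in the constant --- a naive use of the two-sided quadratic sandwich alone would only give a constant like $3\sigma$. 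You also correctly identified the one real obstruction, the gradient step possibly exiting the open set $C$, and your local-to-global fix is the right one. To make it airtight, two routine points should be filled in: the continuity of $\nabla\Phi$ rests on the standard fact that an everywhere-differentiable convex function on an open set has continuous gradient (Rockafellar, Theorem 25.5), and the patching along the segment $[\mathbf{x},\mathbf{y}]$ needs a uniform subdivision length, which follows from the Lebesgue number lemma applied to the cover of that compact segment by the balls on which your local co-coercivity (for both symmetrized inequalities, hence both auxiliary points) is valid; collinearity of the subdivision points then makes the local bounds sum to exactly $\sigma\Vert\mathbf{x}-\mathbf{y}\Vert$.
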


The details are available in the book of Cannarsa and Sinestrari
\cite{CS2004}; the assertion $(a)$ follows from Proposition 2.1.2, p. 30,
while the assertion $(b)$ is motivated by Corollary 3.3.8, p. 61.

As was noticed by Amann \cite{Amann1974}, Proposition 3.2, p. 184, the
G\^{a}teaux differentiability offers a convenient way to recognize the
property of isotonicity of functions acting on ordered Banach spaces: the
positivity of the differential. In the case of convex functions his result can
be stated as follows:

\begin{lemma}
\label{lemAmann_conv}Suppose that $E$ and $F$ are two ordered Banach space,
$C$ is a convex subset of $E$ with nonempty interior $\operatorname{int}C$ and
$\Phi:C\rightarrow F$ is a \emph{ }convex function, continuous on $C$ and
G\^{a}teaux differentiable on $\operatorname{int}C.$ Then $\Phi$ is isotone on
$C$ if and only if $\Phi^{\prime}(\mathbf{a})\geq0$ for all $\mathbf{a}%
\in\operatorname{int}C.$
\end{lemma}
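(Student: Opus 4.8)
The plan is to prove both implications directly, using the gradient inequality of Lemma \ref{lem2} (specialized to $\omega\equiv0$, since $\Phi$ is convex) for sufficiency and the one-sided difference quotient defining the Gâteaux differential for necessity, throughout exploiting that the positive cone $F_{+}$ is closed.

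For the necessity direction, suppose $\Phi$ is isotone and fix $\mathbf{a}\in\operatorname{int}C$ together with an arbitrary $\mathbf{h}\geq0$ in $E$. I would first note that since $\mathbf{a}$ is an interior point, $\mathbf{a}+t\mathbf{h}\in\operatorname{int}C$ for all sufficiently small $t>0$, and that $\mathbf{a}+t\mathbf{h}\geq\mathbf{a}$ because $t\mathbf{h}\geq0$. Isotonicity then forces $\Phi(\mathbf{a}+t\mathbf{h})-\Phi(\mathbf{a})\geq0$, whence the difference quotient $\bigl(\Phi(\mathbf{a}+t\mathbf{h})-\Phi(\mathbf{a})\bigr)/t$ lies in $F_{+}$. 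Letting $t\to0^{+}$ and using that $F_{+}$ is closed, the limit $\Phi^{\prime}(\mathbf{a})\mathbf{h}$ belongs to $F_{+}$; since $\mathbf{h}\geq0$ was arbitrary, $\Phi^{\prime}(\mathbf{a})\geq0$.

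For the sufficiency direction, suppose $\Phi^{\prime}(\mathbf{a})\geq0$ on $\operatorname{int}C$ and take $\mathbf{x}\leq\mathbf{y}$ in $C$. If $\mathbf{x}\in\operatorname{int}C$, I would apply the gradient inequality at $\mathbf{a}=\mathbf{x}$, namely
\[
\Phi(\mathbf{y})-\Phi(\mathbf{x})\geq\Phi^{\prime}(\mathbf{x})(\mathbf{y}-\mathbf{x}),
\]
and observe that $\mathbf{y}-\mathbf{x}\geq0$ combined with the positivity of the operator $\Phi^{\prime}(\mathbf{x})$ yields $\Phi^{\prime}(\mathbf{x})(\mathbf{y}-\mathbf{x})\geq0$, hence $\Phi(\mathbf{x})\leq\Phi(\mathbf{y})$. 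To cover the case where $\mathbf{x}$ or $\mathbf{y}$ lies on the boundary of $C$, I would fix any $\mathbf{c}\in\operatorname{int}C$ and set $\mathbf{x}_{t}=(1-t)\mathbf{x}+t\mathbf{c}$ and $\mathbf{y}_{t}=(1-t)\mathbf{y}+t\mathbf{c}$ for $t\in(0,1]$. The standard fact that a convex combination of an interior point with any point of a convex set is again interior places both $\mathbf{x}_{t}$ and $\mathbf{y}_{t}$ in $\operatorname{int}C$, while $\mathbf{y}_{t}-\mathbf{x}_{t}=(1-t)(\mathbf{y}-\mathbf{x})\geq0$ preserves the order relation. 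The interior case already proved gives $\Phi(\mathbf{x}_{t})\leq\Phi(\mathbf{y}_{t})$; letting $t\to0^{+}$, the continuity of $\Phi$ on $C$ together with the closedness of $F_{+}$ transfers the inequality to $\Phi(\mathbf{x})\leq\Phi(\mathbf{y})$.

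The main obstacle is the boundary case in the sufficiency direction: the gradient inequality is only available at interior points, so one cannot apply it directly when $\mathbf{x}\in\partial C$. The remedy is the interior-approximation argument above, and the step that must be handled carefully is the passage to the limit, which relies essentially on two structural hypotheses of the setting, the continuity of $\Phi$ on all of $C$ and the closedness of the cone $F_{+}$, both of which are standing assumptions in the ordered-Banach-space framework.
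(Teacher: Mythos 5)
Your proposal is correct and follows essentially the same route as the paper: necessity by the difference-quotient argument (what the paper calls ``immediate from the definition of the G\^{a}teaux derivative''), and sufficiency by combining the gradient inequality of Lemma \ref{lem2} on $\operatorname{int}C$ with a straight-line approximation toward an interior point, passing to the limit via the continuity of $\Phi$ and the closedness of $F_{+}$. The paper's segments $\mathbf{u}_{t}=\mathbf{x}_{0}+t(\mathbf{x}-\mathbf{x}_{0})$, $\mathbf{v}_{t}=\mathbf{x}_{0}+t(\mathbf{y}-\mathbf{x}_{0})$ are exactly your $\mathbf{x}_{t},\mathbf{y}_{t}$ after the substitution $t\mapsto1-t$, so the two proofs coincide.
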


\begin{proof}
The "only if" part follows immediately from the definition of the G\^{a}teaux
derivative. For the other implication, notice that the gradient inequality
mentioned by Lemma $2$ shows that $\Phi$ is isotone on $\operatorname{int}C$
if $\Phi^{\prime}(\mathbf{a})\geq0$ for all $\mathbf{a}\in\operatorname{int}%
C$. As concerns the isotonicity on $C,$ that follows by an approximation
argument. Suppose that $\mathbf{x},\mathbf{y}\in C$ and $\mathbf{x}%
\leq\mathbf{y}$. For $\mathbf{x}_{0}\in\operatorname*{int}C$ arbitrarily fixed
and $t\in\lbrack0,1),$ both elements $\mathbf{u}_{t}=\mathbf{x}_{0}%
+t(\mathbf{x}-\mathbf{x}_{0})$ and $\mathbf{v}_{t}=\mathbf{x}_{0}%
+t(\mathbf{y}-\mathbf{x}_{0})$ belong to $\operatorname*{int}C$ and
$\mathbf{u}_{t}\leq\mathbf{v}_{t}.$ Moreover, $\mathbf{u}_{t}\rightarrow
\mathbf{x}$ and $\mathbf{v}_{t}\rightarrow\mathbf{y}$ as $t\rightarrow1.$
Passing to the limit in the inequality $\Phi(\mathbf{u}_{t})\leq
\Phi(\mathbf{v}_{t})$ we conclude that $\Phi(\mathbf{x})\leq\Phi(\mathbf{y}).$
\end{proof}

\begin{remark}
\label{rem3}If the ordered Banach space $E$ has finite dimension, then the
statement of Lemma \emph{\ref{lemAmann_conv}} remains valid by replacing the
interior of $C$ by the relative interior of $C$. See \emph{\cite{NP2018}},
Exercise $6$, p. $81$.
\end{remark}

A key ingredient in our extension of the Hardy-Littlewood-P\'{o}lya inequality
is the isotonicity of the differentials of the functions involved. Unlike the
case of differentiable convex functions of one variable, the isotonicity of
the differential is not mandatory for the differentiable convex functions of
several variables.

\begin{remark}
\label{rem4}$($A difference between the differentiable convex functions of one
real variable and those of several variables$)$ The twice continuously
differentiable function%
\[
\Phi(x,y)=-2\left(  xy\right)  ^{1/2},\quad\left(  x,y\right)  \in
\mathbb{R}_{++}^{2},
\]
is convex due to the fact that its Hessian,%
\[
H=\frac{1}{2}\left(
\begin{array}
[c]{cc}%
x^{-3/2}y^{1/2} & -x^{-1/2}y^{-1/2}\\
-x^{-1/2}y^{-1/2} & x^{1/2}y^{-3/2}%
\end{array}
\right)  ,
\]
is a positive semidefinite matrix. However, unlike the case of convex
functions of one real variable, the differential of $\Phi$,%
\[
d\Phi:\mathbb{R}_{++}^{2}\rightarrow\mathbb{R}^{2},\text{\quad}d\Phi
(x,y)=-(x^{-1/2}y^{1/2},x^{1/2}y^{-1/2}),
\]
is not isotone. Indeed, at the points $\left(  1,1\right)  <(2,1)$ in
$\mathbb{R}_{++}^{2}$ we have
\[
d\Phi(1,1)=-\left(  1,1\right)  \text{ and }d\Phi(2,1)=-(1/\sqrt{2},\sqrt{2})
\]
and these values are not comparable.

On the other hand, a simple example of nonconvex differentiable function whose
differential is isotone is provided by the function%
\[
H(x,y)=(2x-1)(2y-1),\quad\left(  x,y\right)  \in\mathbb{R}^{2}.
\]

\end{remark}

Using the aforementioned result of Amann, one can easily prove the following
criterion of isotonicity of the differentials.

\begin{lemma}
\label{lemconsamann}Suppose that $C$ is an open convex subset of the Banach
lattice $\mathbb{R}^{N}$ and $\Phi:C\rightarrow\mathbb{R}$ is a continuous
function which is twice G\^{a}teaux differentiable. Then $\Phi^{\prime}$ is
isotone on $C$ if $($and only if$)$ all partial derivatives of second order of
$\Phi$ are nonnegative.

When $\Phi$ is also convex, the isotonicity of $\Phi^{\prime}$ is equivalent
to the condition that all mixed derivatives $\frac{\partial^{2}\Phi}{\partial
x_{i}\partial x_{j}}$ are nonnegative.
\end{lemma}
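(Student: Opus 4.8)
The plan is to reduce the isotonicity of the vector-valued map $\Phi^{\prime}$ to $N$ scalar isotonicity statements, one for each first order partial derivative, and then to apply Amann's criterion to each of them. First I would unwind what isotonicity of $\Phi^{\prime}$ means. Since the target space is $\mathbb{R}$, the differential $\Phi^{\prime}(\mathbf{a})$ is a continuous linear functional on $\mathbb{R}^{N}$, that is, an element of the dual lattice $(\mathbb{R}^{N})^{\ast}$, which under the canonical identification is again $\mathbb{R}^{N}$ with the coordinatewise order. A functional $\xi=(\xi_{1},\dots,\xi_{N})$ is positive exactly when $\xi(\mathbf{v})=\sum_{i}\xi_{i}v_{i}\geq0$ for every $\mathbf{v}\geq0$, and since the cone of $\mathbb{R}^{N}$ is generated by the basis vectors $e_{1},\dots,e_{N}$ this holds if and only if all coordinates $\xi_{i}\geq0$. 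As the coordinates of $\Phi^{\prime}(\mathbf{a})$ are precisely the partials $\partial\Phi/\partial x_{i}(\mathbf{a})$, the relation $\Phi^{\prime}(\mathbf{a})\leq\Phi^{\prime}(\mathbf{b})$ is equivalent to $\partial\Phi/\partial x_{i}(\mathbf{a})\leq\partial\Phi/\partial x_{i}(\mathbf{b})$ for every $i$. Hence $\Phi^{\prime}$ is isotone on $C$ if and only if each scalar function $\Psi_{i}=\partial\Phi/\partial x_{i}:C\rightarrow\mathbb{R}$ is isotone.

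Next I would apply the isotonicity criterion of Amann to each $\Psi_{i}$. Since $\Phi$ is twice G\^{a}teaux differentiable, every $\Psi_{i}$ is G\^{a}teaux differentiable on the open convex set $C$, and its differential $\Psi_{i}^{\prime}(\mathbf{a})$ has coordinates $\partial^{2}\Phi/\partial x_{j}\partial x_{i}(\mathbf{a})$, $j=1,\dots,N$. By Amann's result, used here in the form valid for G\^{a}teaux differentiable functions that need not be convex, $\Psi_{i}$ is isotone exactly when $\Psi_{i}^{\prime}(\mathbf{a})\geq0$ for all $\mathbf{a}\in C$, i.e. when all the second order partials $\partial^{2}\Phi/\partial x_{j}\partial x_{i}$ are nonnegative. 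Letting $i$ range over $\{1,\dots,N\}$ and combining with the previous paragraph yields the first assertion. The one place that demands care, and the main obstacle, is precisely this invocation of Amann's criterion for the $\Psi_{i}$: these partials are in general not convex, so Lemma \ref{lemAmann_conv} cannot be quoted verbatim. The honest route is that, because $C$ is open there is no boundary to approximate, and along any segment joining two comparable points $\mathbf{x}\leq\mathbf{y}$ of $C$ the restriction $t\mapsto\Psi_{i}(\mathbf{x}+t(\mathbf{y}-\mathbf{x}))$ is differentiable with derivative $\Psi_{i}^{\prime}(\mathbf{x}+t(\mathbf{y}-\mathbf{x}))(\mathbf{y}-\mathbf{x})\geq0$, hence nondecreasing, giving $\Psi_{i}(\mathbf{x})\leq\Psi_{i}(\mathbf{y})$.

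Finally, for the convex case I would observe that convexity of $\Phi$ forces the pure (diagonal) second partials to be nonnegative for free: the restriction of $\Phi$ to any line parallel to a coordinate axis is a convex function of one real variable, so $\partial^{2}\Phi/\partial x_{i}^{2}\geq0$ for every $i$. Consequently the system \textquotedblleft all second order partials are nonnegative\textquotedblright\ collapses to the single remaining requirement that the \emph{mixed} partials $\partial^{2}\Phi/\partial x_{i}\partial x_{j}$ with $i\neq j$ be nonnegative, which is exactly the second assertion. Here one need only keep track of which derivatives are automatic and which are not; no symmetry of the mixed partials is used, so twice G\^{a}teaux differentiability suffices throughout without any appeal to Schwarz's theorem.
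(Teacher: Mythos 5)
Your proof is correct and follows the route the paper intends: the paper in fact prints no proof of Lemma \ref{lemconsamann}, saying only that it follows ``using the aforementioned result of Amann'' (the positivity-of-the-differential criterion), which is exactly your coordinatewise reduction of the isotonicity of $\Phi^{\prime}$ to that of each $\Psi_{i}=\partial\Phi/\partial x_{i}$ followed by the gradient/mean-value argument along segments joining comparable points. Your explicit acknowledgment that the $\Psi_{i}$ need not be convex --- so that the paper's Lemma \ref{lemAmann_conv} cannot be quoted verbatim and one must either invoke Amann's general (non-convex) criterion or, as you do, argue directly that $t\mapsto\Psi_{i}(\mathbf{x}+t(\mathbf{y}-\mathbf{x}))$ has nonnegative derivative --- is precisely the detail hidden in the paper's ``one can easily prove,'' and your observation that convexity makes the diagonal second partials automatic settles the second assertion as intended.
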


In the light of Lemma \ref{lemconsamann}, the example exhibited in Remark
\ref{rem4}, shows that the property of positive definiteness of the Hessian
matrix does not necessarily imply its positivity as a linear map from
$\mathbb{R}^{2}$ to $\mathbb{R}^{2}$.

Several examples of differentiable functions which are isotone and/or admit
isotone differentials are presented in the Appendix of this paper.

\section{An approximation result}

One can characterize the isotonicity of the differential of a convex function
by using the concept of $2$-box monotonicity, first noticed by Popoviciu
\cite{Pop34} in the case when $N=2.$ See also \cite{GN2019}, where the $2$-box
monotonicity is described in its relationship with another concept due to
Popoviciu, $2$-box convexity. The natural domains of such functions are the
open $N$-dimensional boxes, that is, the products $\prod\nolimits_{k=1}%
^{N}(a_{k},b_{k})$ of $N$ open intervals.

\begin{definition}
\label{def2boxmon}A real-valued function $\Phi$ defined on an open and solid
subset $C$ of the Banach lattice $\mathbb{R}^{N}$ $(N\geq2)$ is $2$-box
monotone if the increment of $\Phi$ over every nondegenerate $2$-dimensional
box%
\[
B_{ij}=\{u_{1}\}\times\cdots\times\lbrack v_{i},w_{i}]\times\cdots
\times\left\{  u_{k}\right\}  \times\cdots\times\lbrack v_{j},w_{j}%
]\times\cdots\times\left\{  u_{N}\right\}  ,\quad1\leq i<j\leq N,
\]
included in $C$ and parallel to one of the planes of coordinates is
nonnegative, that is,
\begin{multline*}
\Delta(\Phi;B_{ij})=\Phi(u_{1},...,v_{i},...,v_{j},...,u_{N})-\Phi
(u_{1},...,v_{i},...,w_{j},...,u_{N})\\
-\Phi(u_{1},...,w_{i},...,v_{j},...,u_{N})+\Phi(u_{1},...,w_{i},...,w_{j}%
,...,u_{N})\geq0.
\end{multline*}

\end{definition}

The property of isotonicity of the differential of a convex function (of two
or more variables) is equivalent to the property of 2-box monotonicity for the
given function. When $\Phi$ is twice continuously differentiable, this follows
directly from Lemma \ref{lemconsamann}. Indeed, for $i<j,$%
\begin{multline*}
\int\nolimits_{v_{i}}^{w_{i}}\int\nolimits_{v_{j}}^{w_{j}}\frac{\partial
^{2}\Phi}{\partial x_{i}\partial x_{j}}(u_{1},...,x_{i},...,x_{j}%
,...,u_{N})\mathrm{d}x_{i}\mathrm{d}x_{j}\\
=\Phi(u_{1},...,v_{i},...,v_{j},...,u_{N})-\Phi(u_{1},...,v_{i},...,w_{j}%
,...,u_{N})\\
-\Phi(u_{1},...,w_{i},...,v_{j},...,u_{N})+\Phi(u_{1},...,w_{i},...,w_{j}%
,...,u_{N}).
\end{multline*}

Remarkably, the continuous differentiability of $\Phi$ suffices as well.

\begin{lemma}
\label{lem2box}Suppose that $C$ is an open box of the Banach lattice
$\mathbb{R}^{N}$ and $\Phi:C\rightarrow\mathbb{R}$ is a continuously
differentiable convex function. Then $\Phi^{\prime}$ is isotone on $C$ if
$($and only if$)$ $\Phi$ is $2$-box monotone.\ 
\end{lemma}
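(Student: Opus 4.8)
The plan is to prove both implications by reducing the isotonicity of $\Phi'$ to the assertion that each partial derivative $\frac{\partial\Phi}{\partial x_{i}}$ is a nondecreasing function of every coordinate separately. Since $\mathbb{R}^{N}$ carries the coordinatewise order and its positive cone is self-dual, the isotonicity of $\Phi':C\rightarrow(\mathbb{R}^{N})^{\ast}=\mathbb{R}^{N}$ means exactly that $\mathbf{a}\leq\mathbf{b}$ implies $\frac{\partial\Phi}{\partial x_{i}}(\mathbf{a})\leq\frac{\partial\Phi}{\partial x_{i}}(\mathbf{b})$ for every $i$. Because $C$ is a box, any pair $\mathbf{a}\leq\mathbf{b}$ in $C$ can be joined by a path that increases one coordinate at a time without leaving $C$, so it suffices to control the effect on $\frac{\partial\Phi}{\partial x_{i}}$ of increasing a single coordinate $x_{j}$. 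I would handle the diagonal case $j=i$ and the off-diagonal case $j\neq i$ separately, the former coming from convexity (which the $2$-box increments, defined only for $i<j$, cannot see) and the latter from $2$-box monotonicity.

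For the (easy) ``only if'' implication, fix $i<j$ and an admissible box $B_{ij}$. Grouping the four corner values of the increment and integrating in the $i$-th variable by the fundamental theorem of calculus, which is legitimate since $\Phi\in C^{1}$, gives
\[
\Delta(\Phi;B_{ij})=\int_{v_{i}}^{w_{i}}\left[\frac{\partial\Phi}{\partial x_{i}}(\ldots,s,\ldots,w_{j},\ldots)-\frac{\partial\Phi}{\partial x_{i}}(\ldots,s,\ldots,v_{j},\ldots)\right]\mathrm{d}s.
\]
If $\Phi'$ is isotone, then the two evaluation points (differing only in the $j$-th coordinate, with $v_{j}\leq w_{j}$) are comparable, so their gradients are comparable and the integrand is nonnegative; hence $\Delta(\Phi;B_{ij})\geq0$ and $\Phi$ is $2$-box monotone.

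The substance is the ``if'' direction. Convexity of $\Phi$ forces each restriction $t\mapsto\Phi(\ldots,t,\ldots)$ along the $i$-th axis to be convex, whence $\frac{\partial\Phi}{\partial x_{i}}$ is nondecreasing in $x_{i}$; this settles the diagonal case. For $j\neq i$ I would read the displayed identity in reverse: $2$-box monotonicity says that for every nondegenerate interval $[v_{i},w_{i}]$ the integral over $[v_{i},w_{i}]$ of
\[
h(s)=\frac{\partial\Phi}{\partial x_{i}}(\ldots,s,\ldots,w_{j},\ldots)-\frac{\partial\Phi}{\partial x_{i}}(\ldots,s,\ldots,v_{j},\ldots)
\]
is nonnegative. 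Since $\Phi\in C^{1}$, the function $h$ is continuous, and a function whose integral over every subinterval is nonnegative must itself be nonnegative pointwise; evaluating at the relevant value of $s$ shows that $\frac{\partial\Phi}{\partial x_{i}}$ is nondecreasing in $x_{j}$. Assembling the diagonal and off-diagonal monotonicities along the coordinate path from $\mathbf{a}$ to $\mathbf{b}$ gives $\frac{\partial\Phi}{\partial x_{i}}(\mathbf{a})\leq\frac{\partial\Phi}{\partial x_{i}}(\mathbf{b})$ for all $i$, i.e. $\Phi'$ is isotone.

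I expect the only genuinely delicate point to be this off-diagonal passage from the integral inequality to the pointwise inequality $h\geq0$, where the continuity of the gradient is precisely what rescues the argument under the mere $C^{1}$ (rather than $C^{2}$) hypothesis. As a fallback I would keep a mollification route: convolving $\Phi$ with a nonnegative smooth kernel preserves convexity and, since the $2$-box increment is linear in $\Phi$ and compatible with translations, preserves $2$-box monotonicity on slightly smaller boxes; one could then apply Lemma \ref{lemconsamann} to the smooth approximants and let the mollification parameter tend to $0$, using $\nabla\Phi_{\varepsilon}\rightarrow\nabla\Phi$ locally uniformly together with the closedness of the cone $\mathbb{R}_{+}^{N}$ to transfer the isotonicity to $\Phi$. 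The direct argument is shorter, so I would present it and relegate the mollification to a remark.
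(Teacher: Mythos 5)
Your proof is correct and follows essentially the same route as the paper's own argument: you reduce isotonicity of $\Phi'$ to separate monotonicity of each $\partial\Phi/\partial x_i$ in each coordinate, get the diagonal case from one-variable convexity, get the off-diagonal case from the fundamental-theorem-of-calculus identity relating the $2$-box increment to the integral of a difference of partial derivatives, and assemble the pieces along a coordinate path inside the box (the paper's ``mathematical induction''). The step you flag as delicate---passing from nonnegativity of the integral over every subinterval to pointwise nonnegativity of the continuous integrand---is precisely what the paper leaves implicit in its ``we can easily infer,'' so your write-up simply makes the paper's reasoning explicit.
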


\begin{proof}
The fact that $\Phi^{\prime}$ is isotone is equivalent to%
\begin{equation}
\frac{\partial\Phi}{\partial x_{k}}(u_{1},...,u_{N})\leq\frac{\partial\Phi
}{\partial x_{k}}(v_{1},...,v_{N}) \label{ineqmonder}%
\end{equation}
for all indices $k\in\{1,...,N\}$ and all pairs of points $\mathbf{u}%
=(u_{1},...,u_{N})\leq\mathbf{v=}(v_{1},...,v_{N})$ in $C.$

Since $\Phi$ is differentiable and convex in each variable, we have%
\[
\frac{\partial\Phi}{\partial x_{k}}(u_{1},...,u_{k-1},x_{k},u_{k+1}%
,...,u_{N})\leq\frac{\partial\Phi}{\partial x_{k}}(u_{1},...,u_{k-1}%
,y_{k},u_{k+1},...,u_{N})
\]
whenever $(u_{1},...,u_{k-1},x_{k},u_{k+1},...,u_{N})\leq(u_{1},...,u_{k-1}%
,y_{k},u_{k+1},...,u_{N})$ in $C.$

Using the identity%
\begin{align*}
&  \int_{x_{j}}^{y_{j}}\left(  \frac{\partial\Phi}{\partial x_{j}}%
(u_{1},...,y_{i},...,t,...,u_{N})-\frac{\partial\Phi}{\partial x_{j}}%
(u_{1},...,x_{i},...,t,...,u_{N})\right)  \mathrm{d}t\\
&  =\Phi(x_{1},...,y_{i},...,y_{j},...,x_{N})-\Phi(x_{1},...,y_{i}%
,...,x_{j},...,x_{N})\\
&  -\Phi(x_{1},...,x_{i},...,y_{j},...,x_{N})+\Phi(x_{1},...,x_{i}%
,...,x_{j},...,x_{N}),
\end{align*}
which works for every nondegenerate $2$-dimensional box%
\[
\{u_{1}\}\times\cdots\times\lbrack x_{i},y_{i}]\times\cdots\times\left\{
u_{k}\right\}  \times\cdots\times\lbrack x_{j},y_{j}]\times\cdots
\times\left\{  u_{N}\right\}
\]
included in $C,$ we can easily infer that the $2$-box monotonicity is
equivalent with the isotonicity of each partial derivative $\frac{\partial
\Phi}{\partial x_{k}}$ in each variable distinct from the $k$th, when the
others are kept fixed. By mathematical induction one can put together all
these facts to obtain the inequalities (\ref{ineqmonder}).
\end{proof}

The reader can verify easily that the following two nondifferentiable
functions,
\[
\min\left\{  x_{1},x_{2}\right\}  \text{ and }\max\{x_{1}+x_{2}%
-1,0\},\text{\quad}x_{1},x_{2}\in\lbrack0,1],
\]
are 2-box monotone. They are known in the theory of copulas as the
\emph{Fr\'{e}chet-Hoeffding bounds}. See \cite{Ne2006}. The second function is
convex but the first one is not convex. This fact combined with with Remark
\ref{rem4} shows that the notions of convexity and 2-box monotonicity are
independent in dimension $N\geq2.$

The analogue of $2$-box monotonicity for functions $f$ defined on an interval
$[a,b]$ is the property of \emph{equal increasing increments} (which is
equivalent to convexity):
\[
f(x+z)-f(x)\leq f(y+z)-f(y)
\]
whenever $x\leq y,$ $z>0$ and $x,y,y+z\in\lbrack a,b].$ See \cite{NP2018},
Remark 1.4.1, p. 25 and Corollary 1.4.6, p. 29. An immediate consequence of
this property is the fact that every function of the form
\[
\Phi(\mathbf{x})=f\left(  \langle\mathbf{x},\mathbf{v}\rangle\right)
,\quad\mathbf{x}\in\mathbb{R}^{N},
\]
associated to a convex function $f:\mathbb{R\rightarrow R}$ and a vector
$\mathbf{w}\in\mathbb{R}_{+}^{N}$ is $2$-box convex.

The increment of the log-sum-exp function over the box%
\[
\lbrack0,1]\times\lbrack0,1]\times\{0\}\times\cdots\times\{0\}
\]
equals
\[
\log N-2\log(e+N-1)+\log\left(  2e+N-2\right)  <0
\]
so that, this function is not $2$-box monotone. According to Lemma
\ref{lem2box}, the differential of the log-sum-exp function is not isotone.

The usefulness of the concept of 2-box monotonicity is made clear by the
following approximation result.

\begin{theorem}
\label{thmappr}Suppose that $\Phi$ is a $2$-box monotone convex function
defined on an open box $C$ included in $\mathbb{R}^{N}.$ Then on every compact
box $K\subset C,$ $\Phi$ is the uniform limit of a sequence of infinitely
differentiable strongly convex functions with isotone differentials.

When $C\subset\mathbb{R}_{++}^{N}$ and the function $\Phi$ is also isotone,
then the approximants $\Phi_{n}$ can be chosen to be isotone.
\end{theorem}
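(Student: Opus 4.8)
The plan is to produce the approximants by mollification and then add a small quadratic perturbation to gain strong convexity, checking at each stage that $2$-box monotonicity (and, in the isotone case, isotonicity) survives. Since $\Phi$ is convex on the open box $C$ it is continuous there; so I would fix the compact box $K\subset C$ and choose an open box $U$ and a compact box $K_{1}$ with $K\subset U\subset K_{1}\subset C$. Let $\varphi$ be a standard nonnegative $C^{\infty}$ mollifier with $\int_{\mathbb{R}^{N}}\varphi=1$ supported in a small ball, put $\varphi_{\varepsilon}(\mathbf{y})=\varepsilon^{-N}\varphi(\mathbf{y}/\varepsilon)$, and set
\[
\Phi_{\varepsilon}(\mathbf{x})=\int_{\mathbb{R}^{N}}\Phi(\mathbf{x}-\mathbf{y})\varphi_{\varepsilon}(\mathbf{y})\,\mathrm{d}\mathbf{y}.
\]
For $\varepsilon$ smaller than the distance from $U$ to the complement of $K_{1}$, the integrand only uses values of $\Phi$ on $K_{1}$, so $\Phi_{\varepsilon}$ is a well-defined $C^{\infty}$ function on $U$.

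Next I would record the structural properties inherited by $\Phi_{\varepsilon}$. As an average of translates of the convex function $\Phi$, it is convex; and since the $2$-box increment $\Delta(\,\cdot\,;B_{ij})$ is a fixed linear combination of four point evaluations, linearity gives
\[
\Delta(\Phi_{\varepsilon};B_{ij})=\int_{\mathbb{R}^{N}}\Delta(\Phi;B_{ij}-\mathbf{y})\,\varphi_{\varepsilon}(\mathbf{y})\,\mathrm{d}\mathbf{y}\geq0,
\]
because each translated box $B_{ij}-\mathbf{y}$ (with $\mathbf{y}$ in the support of $\varphi_{\varepsilon}$) still lies in $C$ and $\Phi$ is $2$-box monotone. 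Thus $\Phi_{\varepsilon}$ is a $C^{\infty}$, convex, $2$-box monotone function on the open box $U$, and Lemma \ref{lem2box} gives that its differential $\Phi_{\varepsilon}^{\prime}$ is isotone on $U$. Uniform convergence $\Phi_{\varepsilon}\to\Phi$ on $K$ as $\varepsilon\to0$ then follows from the uniform continuity of $\Phi$ on the compact set $K_{1}$, exactly as in the classical mollification argument.

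To upgrade convexity to strong convexity while retaining everything else, I would perturb by a quadratic. Put $q(\mathbf{x})=\tfrac12\|\mathbf{x}\|^{2}=\tfrac12\sum_{k}x_{k}^{2}$, whose differential $q^{\prime}(\mathbf{a})=(a_{1},\dots,a_{N})$ is isotone, since $\mathbf{a}\leq\mathbf{b}$ yields $\sum_{k}a_{k}z_{k}\leq\sum_{k}b_{k}z_{k}$ for every $\mathbf{z}\geq0$. Then for any $\delta>0$ the function $\Phi_{\varepsilon}+\delta q$ is $\delta$-strongly convex, still $C^{\infty}$, and has isotone differential $\Phi_{\varepsilon}^{\prime}+\delta q^{\prime}$ as a sum of two isotone maps. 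Choosing $\delta_{n}=1/n$ and $\varepsilon_{n}$ small enough that $\|\Phi_{\varepsilon_{n}}-\Phi\|_{\infty,K}<1/n$, the functions $\Phi_{n}=\Phi_{\varepsilon_{n}}+\tfrac1n q$ are infinitely differentiable, strongly convex, have isotone differentials, and converge to $\Phi$ uniformly on $K$ (the perturbation $\tfrac1n q$ tends to $0$ uniformly on the compact set $K$).

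For the isotone case with $C\subset\mathbb{R}_{++}^{N}$, the convolution $\Phi_{\varepsilon}$ inherits isotonicity directly, since $\mathbf{x}\leq\mathbf{x}^{\prime}$ forces $\mathbf{x}-\mathbf{y}\leq\mathbf{x}^{\prime}-\mathbf{y}$ and hence $\Phi(\mathbf{x}-\mathbf{y})\leq\Phi(\mathbf{x}^{\prime}-\mathbf{y})$ under the integral; moreover $q$ is isotone on $\mathbb{R}_{++}^{N}$, because its gradient there is $\geq0$. Consequently $\Phi_{n}=\Phi_{\varepsilon_{n}}+\tfrac1n q$ remains isotone, and this is precisely where the hypothesis $C\subset\mathbb{R}_{++}^{N}$ is needed. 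I expect the one genuinely nontrivial point to be the middle step: verifying that mollification preserves $2$-box monotonicity is what lets Lemma \ref{lem2box} convert the non-differentiable monotonicity of $\Phi$ into true isotonicity of the smooth differential $\Phi_{\varepsilon}^{\prime}$, whereas convexity, smoothness, the quadratic upgrade and the uniform convergence are all routine mollification bookkeeping.
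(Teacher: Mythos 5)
Your proposal is correct and follows essentially the same route as the paper: convolution with a mollifier preserves convexity and $2$-box monotonicity, Lemma \ref{lem2box} then converts $2$-box monotonicity of the smooth convex approximant into isotonicity of its differential, and a small quadratic term supplies the strong convexity. The only (immaterial) difference is the order of operations --- the paper adds $\varepsilon\left\Vert \cdot\right\Vert ^{2}$ \emph{before} mollifying, whereas you mollify first and perturb afterwards --- and your handling of the supports, the uniform convergence, and the isotone case is if anything more explicit than the paper's.
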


\begin{proof}
We use the usual convolution based smooth approximation. Let $\varepsilon>0$
be arbitrarily fixed. Then the function $\Psi=\Phi+\varepsilon\left\Vert
\cdot\right\Vert ^{2}$ is $2$-box monotone and $\varepsilon$-strongly convex.
Besides,
\[
\left\Vert \Psi(\mathbf{x})-\Phi(\mathbf{x})\right\Vert \leq\varepsilon
\sup\left\{  \left\Vert \mathbf{x}\right\Vert ^{2}:\mathbf{x}\in K\right\}
\text{\quad for all }\mathbf{x}\in K.
\]
According to the method of mollifiers, the convolution
\[
(\Psi\ast\varphi)(\mathbf{x})=\int_{\mathbb{R}^{N}}\Psi(\mathbf{x}%
-\mathbf{y})\varphi(\mathbf{y})\mathrm{d}\mathbf{y,}%
\]
of $\Psi$ with any infinitely differentiable function $\varphi:\mathbb{R}%
^{N}\rightarrow\lbrack0,\infty)$ such that $\varphi=0$ on $\mathbb{R}%
^{N}\backslash K$ and $\int_{\mathbb{R}^{N}}\varphi(\mathbf{y})\mathrm{d}%
\mathbf{y}=1,$ is an infinitely differentiable function that provides a
regularization of $\Psi$ since $\Psi\ast\varphi\rightarrow\Psi$ uniformly on
$K$ as the support of $\varphi$ shrinks to $\left\{  0\right\}  .$ An easy
computation shows that $\Psi\ast\varphi$ is also a $2$-box monotone and
$\varepsilon$-strongly convex function. Indeed, with the notation in
Definition \ref{def2boxmon}, we have
\[
\Delta(\Psi\ast\varphi;B_{ij})=\int_{\mathbb{R}^{N}}\Delta(\Psi(\mathbf{x}%
-\mathbf{y});B_{ij})\varphi(\mathbf{y})\mathrm{d}\mathbf{y\geq0}%
\]
and%
\begin{multline*}
(\Psi\ast\varphi)((1-\lambda)\mathbf{u+\lambda v})=\int_{\mathbb{R}^{N}}%
\Psi((1-\lambda)\left(  \mathbf{u-\mathbf{y}}\right)  \mathbf{+\lambda}\left(
\mathbf{v}-\mathbf{y}\right)  )\varphi(\mathbf{y})\mathrm{d}\mathbf{y}\\
\leq\int_{\mathbb{R}^{N}}\left[  \left(  1-\lambda\right)  \Psi\left(
\mathbf{u-\mathbf{y}}\right)  \mathbf{+}\lambda\Psi\left(  \mathbf{v}%
-\mathbf{y}\right)  )-\lambda(1-\lambda)\left\Vert \mathbf{u}-\mathbf{v}%
\right\Vert ^{2}\right]  \varphi(\mathbf{y})\mathrm{d}\mathbf{y}\\
=\left(  1-\lambda\right)  \left(  \Psi\ast\varphi\right)  (\mathbf{u}%
)+\lambda\left(  \Psi\ast\varphi\right)  (\mathbf{v})-\varepsilon
\lambda(1-\lambda)\left\Vert \mathbf{u}-\mathbf{v}\right\Vert ^{2}.
\end{multline*}

Then the conclusion of Theorem \ref{thmappr} follows from Lemma \ref{lem2box}.
\end{proof}

\section{The majorization inequality in the context of ordered Banach spaces}

We start with the case of differentiable convex functions.

\begin{theorem}
\label{thmHLPgen}Suppose that $\sum_{k=1}^{N}\lambda_{k}\delta_{\mathbf{x}%
_{k}}$ and $\sum_{k=1}^{N}\lambda_{k}\delta_{\mathbf{y}_{k}}$ are two discrete
probability measures whose supports are included in an open convex subset $C$
of the ordered Banach space $E$. If $\sum_{k=1}^{N}\lambda_{k}\delta
_{\mathbf{x}_{k}}\prec_{L^{\downarrow}}\sum_{k=1}^{N}\lambda_{k}%
\delta_{\mathbf{y}_{k}},$ then%
\begin{equation}
\sum_{k=1}^{N}\lambda_{k}\Phi(\mathbf{y}_{k})\geq\sum_{k=1}^{N}\lambda_{k}%
\Phi(\mathbf{x}_{k})+\sum_{k=1}^{N}\lambda_{k}\omega(\left\Vert \mathbf{x}%
_{k}-\mathbf{y}_{k}\right\Vert ) \label{Cons1}%
\end{equation}
for every G\^{a}teaux differentiable $\omega$-convex function $\Phi
:C\rightarrow F$ whose differential is isotone, while if $\sum_{k=1}%
^{N}\lambda_{k}\delta_{\mathbf{x}_{k}}\prec_{R^{\uparrow}}\sum_{k=1}%
^{N}\lambda_{k}\delta_{\mathbf{y}_{k}},$ the inequality $(\ref{Cons1})$ works
in the reversed sense.

If $\sum_{k=1}^{N}\lambda_{k}\delta_{\mathbf{x}_{k}}\prec_{wL^{\downarrow}%
}\sum_{k=1}^{N}\lambda_{k}\delta_{\mathbf{y}_{k}},$ then%
\begin{equation}
\sum_{k=1}^{n}\lambda_{k}\Phi(\mathbf{y}_{k})\geq\sum_{k=1}^{n}\lambda_{k}%
\Phi(\mathbf{x}_{k})+\sum_{k=1}^{n}\lambda_{k}\omega(\left\Vert \mathbf{x}%
_{k}-\mathbf{y}_{k}\right\Vert )\text{\quad for }n\in\{1,...,N\} \label{Cons2}%
\end{equation}
whenever $\Phi:C\rightarrow F$ is an isotone and G\^{a}teaux differentiable
$\omega$-convex function whose differential is isotone. Under the same
hypotheses on $\Phi,$ the inequality $(\ref{Cons2})$ works in the reverse way
when $\sum_{k=1}^{N}\lambda_{k}\delta_{\mathbf{x}_{k}}\prec_{wR^{\uparrow}%
}\sum_{k=1}^{N}\lambda_{k}\delta_{\mathbf{y}_{k}}.$
\end{theorem}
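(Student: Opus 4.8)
The plan is to reduce everything to the pointwise gradient inequality of Lemma \ref{lem2} and then run a summation-by-parts (Abel) argument, exactly as in the classical one-variable proof, but carried out in the vector/operator setting. Concentrating first on the relation $\prec_{L^{\downarrow}}$, I would apply \eqref{grad_ineq} with $\mathbf{a}=\mathbf{x}_{k}$ and $\mathbf{x}=\mathbf{y}_{k}$ to get
\[
\Phi(\mathbf{y}_{k})-\Phi(\mathbf{x}_{k})\geq\Phi^{\prime}(\mathbf{x}_{k})(\mathbf{y}_{k}-\mathbf{x}_{k})+\omega(\left\Vert \mathbf{x}_{k}-\mathbf{y}_{k}\right\Vert ),
\]
multiply by $\lambda_{k}>0$ and add over $k=1,\dots,N$. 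Then \eqref{Cons1} follows once I show that the linear remainder $\sum_{k=1}^{N}\Phi^{\prime}(\mathbf{x}_{k})\big(\lambda_{k}(\mathbf{y}_{k}-\mathbf{x}_{k})\big)$ is $\geq 0$ in $F$.

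For the remainder, introduce the partial sums $S_{n}=\sum_{k=1}^{n}\lambda_{k}(\mathbf{y}_{k}-\mathbf{x}_{k})$, with $S_{0}=0$, so that Abel summation yields
\[
\sum_{k=1}^{N}\Phi^{\prime}(\mathbf{x}_{k})\big(\lambda_{k}(\mathbf{y}_{k}-\mathbf{x}_{k})\big)=\Phi^{\prime}(\mathbf{x}_{N})S_{N}+\sum_{k=1}^{N-1}\big(\Phi^{\prime}(\mathbf{x}_{k})-\Phi^{\prime}(\mathbf{x}_{k+1})\big)S_{k}.
\]
Every ingredient is then positive for the right structural reason: condition \eqref{maj1} gives $S_{k}\geq 0$; condition \eqref{maj2} gives $S_{N}=0$, so the boundary term drops; and the decreasing string \eqref{x} together with the isotonicity of $\Phi^{\prime}$ gives $\Phi^{\prime}(\mathbf{x}_{k})-\Phi^{\prime}(\mathbf{x}_{k+1})\geq 0$ as a \emph{positive operator}. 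Since a positive operator maps $E_{+}$ into $F_{+}$, applying it to $S_{k}\in E_{+}$ keeps each summand in $F_{+}$, so the remainder is $\geq 0$ and \eqref{Cons1} is proved.

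For the weak relation $\prec_{wL^{\downarrow}}$ I would run the identical computation but truncate the sums at an arbitrary $n\leq N$, so that the Abel identity reads $\sum_{k=1}^{n}\Phi^{\prime}(\mathbf{x}_{k})(\lambda_{k}(\mathbf{y}_{k}-\mathbf{x}_{k}))=\Phi^{\prime}(\mathbf{x}_{n})S_{n}+\sum_{k=1}^{n-1}(\Phi^{\prime}(\mathbf{x}_{k})-\Phi^{\prime}(\mathbf{x}_{k+1}))S_{k}$. Now \eqref{maj2} is unavailable, so the boundary term $\Phi^{\prime}(\mathbf{x}_{n})S_{n}$ survives, and this is precisely where the extra hypothesis that $\Phi$ be isotone enters: by Lemma \ref{lemAmann_conv} isotonicity is equivalent to $\Phi^{\prime}(\mathbf{x}_{n})\geq 0$, whence $\Phi^{\prime}(\mathbf{x}_{n})S_{n}\geq 0$ because $S_{n}\geq 0$. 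This delivers \eqref{Cons2}. The two $R^{\uparrow}$-statements follow by the symmetric argument: one applies \eqref{grad_ineq} at the points $\mathbf{y}_{k}$ rather than $\mathbf{x}_{k}$, uses the ascending ordering \eqref{y} so that $\Phi^{\prime}(\mathbf{y}_{k})-\Phi^{\prime}(\mathbf{y}_{k+1})\leq 0$, and observes that the two sign reversals (from interchanging $\mathbf{x}_{k}$ and $\mathbf{y}_{k}$ in the gradient inequality and from the now-decreasing operator differences acting on $S_{k}\geq 0$) combine to flip the final inequality into the reversed forms of \eqref{Cons1} and \eqref{Cons2}.

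The routine parts are the gradient inequality and the linearity of each $\Phi^{\prime}(\mathbf{x}_{k})$, which lets the remainder be distributed term by term; the one genuinely delicate point is the bookkeeping of positivity in the operator setting. I must use that \emph{isotone differential} means the order-preserving behaviour of the map $\mathbf{a}\mapsto\Phi^{\prime}(\mathbf{a})$ into the cone of positive operators from $E$ to $F$, so that a difference $\Phi^{\prime}(\mathbf{x}_{k})-\Phi^{\prime}(\mathbf{x}_{k+1})$ is not just an operator but a positive one, and then that positive operators carry $E_{+}$ into $F_{+}$. Keeping straight which positivity is responsible for each piece of the Abel expansion---the partial sums $S_{k}$, the operator differences, and (in the weak case) the positivity of $\Phi^{\prime}$ itself---is the crux; once that is organized, all four assertions drop out of the single identity above.
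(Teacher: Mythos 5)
Your treatment of the two $L^{\downarrow}$ cases coincides with the paper's own proof: the gradient inequality \eqref{grad_ineq} applied at the points $\mathbf{x}_{k}$, Abel summation with the partial sums $S_{n}$, positivity of the operator differences $\Phi^{\prime}(\mathbf{x}_{k})-\Phi^{\prime}(\mathbf{x}_{k+1})$ coming from \eqref{x} and the isotonicity of $\Phi^{\prime}$, the vanishing of the boundary term under \eqref{maj2}, and Lemma \ref{lemAmann_conv} for the surviving boundary term in the weak case; your explicit truncation at an arbitrary $n$ is in fact slightly more complete than the paper, which writes out only $n=N$. Your strong $R^{\uparrow}$ case is also sound: applying \eqref{grad_ineq} at the $\mathbf{y}_{k}$, the differences $\Phi^{\prime}(\mathbf{y}_{m})-\Phi^{\prime}(\mathbf{y}_{m+1})\leq0$ act on $-S_{m}\leq0$, each summand is positive, and \eqref{maj2} kills the boundary term, giving the reversed form of \eqref{Cons1}.

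The gap is in your final claim, that the same symmetry delivers the reversed form of \eqref{Cons2} under $\prec_{wR^{\uparrow}}$. There the boundary term does not reverse along with everything else: truncating at $n$, it equals $\Phi^{\prime}(\mathbf{y}_{n})\bigl(\sum_{k=1}^{n}\lambda_{k}(\mathbf{x}_{k}-\mathbf{y}_{k})\bigr)$, and for an \emph{isotone} $\Phi$ (the hypothesis carried over from the weak $L^{\downarrow}$ statement) one has $\Phi^{\prime}(\mathbf{y}_{n})\geq0$ acting on an element $\leq0$, so this term is $\leq0$ --- the wrong sign. The two sign reversals you invoke do cancel in the telescoping sum, but not in the boundary term, whose sign rests on $\Phi^{\prime}\geq0$, a condition that does not flip when the roles of $\mathbf{x}_{k}$ and $\mathbf{y}_{k}$ are interchanged. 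Moreover, no better argument can repair this: take $E=F=\mathbb{R}$, $\lambda_{1}=\lambda_{2}=1/2$, $\Phi(t)=t$, $\omega\equiv0$, $\mathbf{x}_{1}=\mathbf{x}_{2}=0$ and $\mathbf{y}_{1}=0\leq\mathbf{y}_{2}=1$; then $\prec_{wR^{\uparrow}}$ holds, yet $\sum\lambda_{k}\Phi(\mathbf{x}_{k})=0<1/2=\sum\lambda_{k}\Phi(\mathbf{y}_{k})$, so the reversed inequality fails for isotone $\Phi$. The weak $R^{\uparrow}$ case actually requires $\Phi$ \emph{antitone} (then $\Phi^{\prime}(\mathbf{y}_{n})\leq0$ and the boundary term becomes $\geq0$), in line with the weak cases of Theorem \ref{thmdualmaj}, where antitonicity of $\Phi$ is assumed. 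In fairness, the paper hides all $R^{\uparrow}$ cases behind ``the other cases can be treated in a similar way,'' and its phrase ``under the same hypotheses on $\Phi$'' glosses over exactly this point; but since you explicitly asserted that the symmetric argument closes the weak case, the missing sign analysis is a genuine gap in your write-up.
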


\begin{proof}
According to the gradient inequality (\ref{grad_ineq}),%
\begin{multline*}
\sum_{k=1}^{N}\lambda_{k}\Phi(\mathbf{y}_{k})-\sum_{k=1}^{N}\lambda_{k}%
\Phi(\mathbf{x}_{k})=\sum_{k=1}^{N}\lambda_{k}\left(  \Phi(\mathbf{y}%
_{k})-\Phi(\mathbf{x}_{k})\right) \\
\geq\sum_{k=1}^{N}\Phi^{\prime}(\mathbf{x}_{k})(\lambda_{k}\mathbf{y}%
_{k}-\lambda_{k}\mathbf{x}_{k})+\sum_{k=1}^{N}\lambda_{k}\omega\left(
\left\Vert \mathbf{x}_{k}-\mathbf{y}_{k}\right\Vert \right)  ,
\end{multline*}
whence, by using Abel's trick of interchanging the order of summation
(\cite{NP2018}, Theorem 1.9.5, p. 57), one obtains%
\begin{multline*}
D=\sum_{k=1}^{N}\lambda_{k}\Phi(\mathbf{y}_{k})-\sum_{k=1}^{N}\lambda_{k}%
\Phi(\mathbf{x}_{k})-\sum_{k=1}^{N}\lambda_{k}\omega\left(  \left\Vert
\mathbf{x}_{k}-\mathbf{y}_{k}\right\Vert \right) \\
\geq\Phi^{\prime}(\mathbf{x}_{1})(\lambda_{1}\mathbf{y}_{1}-\lambda
_{1}\mathbf{x}_{1})+\sum_{m=2}^{N}\Phi^{\prime}(\mathbf{x}_{m})\Bigl[\sum
_{k=1}^{m}(\lambda_{k}\mathbf{y}_{k}-\lambda_{k}\mathbf{x}_{k})-\sum
_{k=1}^{m-1}(\lambda_{k}\mathbf{y}_{k}-\lambda_{k}\mathbf{x}_{k})\Bigr]\\
=\sum_{m=1}^{N-1}\Bigl[(\Phi^{\prime}(\mathbf{x}_{m})-\Phi^{\prime}%
(\mathbf{x}_{m+1}))\sum_{k=1}^{m}(\lambda_{k}\mathbf{y}_{k}-\lambda
_{k}\mathbf{x}_{k})\Bigr]+\Phi^{\prime}(\mathbf{x}_{N})\left(  \sum_{k=1}%
^{N}(\lambda_{k}\mathbf{y}_{k}-\lambda_{k}\mathbf{x}_{k})\right)  .
\end{multline*}
When $\sum_{k=1}^{N}\lambda_{k}\delta_{\mathbf{x}_{k}}\prec_{L^{\downarrow}%
}\sum_{k=1}^{N}\lambda_{k}\delta_{\mathbf{y}_{k}},$ the last term vanishes and
the fact that $D\geq0$ is a consequence of the isotonicity of $\Phi^{\prime}.$
When $\sum_{k=1}^{N}\lambda_{k}\delta_{\mathbf{x}_{k}}\prec_{wL^{\downarrow}%
}\sum_{k=1}^{N}\lambda_{k}\delta_{\mathbf{y}_{k}}$ and $\Phi$ is isotone, one
applies Lemma \ref{lemAmann_conv} $(a)$ to infer that%
\[
\Phi^{\prime}(\mathbf{x}_{N})\left(  \sum_{k=1}^{N}(\lambda_{k}\mathbf{y}%
_{k}-\lambda_{k}\mathbf{x}_{k})\right)  \geq0.
\]

The other cases can be treated in a similar way.
\end{proof}

The specific statement of Theorem \ref{thmHLPgen} for the class of strongly
convex functions, the class of semiconvex functions as well as its translation
in the case of strongly concave functions and of semiconcave functions is left
to the reader as an exercise. We will detail here only the case of $\sigma
$-smooth functions, which in the light of Lemma \ref{lem4} appears as a
Legendre-Fenchel dual of the majorization inequality.

\begin{theorem}
\label{thmdualmaj}Suppose that $\sum_{k=1}^{N}\lambda_{k}\delta_{\mathbf{x}%
_{k}}$ and $\sum_{k=1}^{N}\lambda_{k}\delta_{\mathbf{y}_{k}}$ are two discrete
probability measures whose supports are included in an open convex subset $C$
of the ordered Banach space $E$. If $\sum_{k=1}^{N}\lambda_{k}\delta
_{\mathbf{x}_{k}}\prec_{L^{\downarrow}}\sum_{k=1}^{N}\lambda_{k}%
\delta_{\mathbf{y}_{k}},$ then%
\begin{equation}
\sum_{k=1}^{N}\lambda_{k}\Phi(\mathbf{y}_{k})\leq\sum_{k=1}^{N}\lambda_{k}%
\Phi(\mathbf{x}_{k})+\frac{\sigma}{2}\sum_{k=1}^{N}\lambda_{k}\left\Vert
\mathbf{x}_{k}-\mathbf{y}_{k}\right\Vert ^{2} \label{maj1sm}%
\end{equation}
for every G\^{a}teaux differentiable and $\sigma$-smooth function
$\Phi:C\rightarrow F$ whose differential is antitone on $C.$

If $\sum_{k=1}^{N}\lambda_{k}\delta_{\mathbf{x}_{k}}\prec_{R^{\uparrow}}%
\sum_{k=1}^{N}\lambda_{k}\delta_{\mathbf{y}_{k}},$ then the conclusion
$(\ref{maj1sm})$ should be replaced by%
\begin{equation}
\sum_{k=1}^{N}\lambda_{k}\Phi(\mathbf{x}_{k})+\sum_{k=1}^{N}\lambda_{k}%
\omega(\left\Vert \mathbf{x}_{k}-\mathbf{y}_{k}\right\Vert )\geq\sum_{k=1}%
^{N}\lambda_{k}\Phi(\mathbf{y}_{k}). \label{maj2sm}%
\end{equation}

Moreover, if the majorization relations $\prec_{L^{\downarrow}}$and
$\prec_{R^{\uparrow}}$are replaced respectively by $\prec_{wL^{\downarrow}}%
$and $\prec_{wR^{\uparrow}},$ then the inequalities $(\ref{maj1sm})$ and
$(\ref{maj2sm})$ still work for those G\^{a}teaux differentiable and $\sigma
$-smooth functions $\Phi:C\rightarrow F$ which are antitone and have antitone differentials.
\end{theorem}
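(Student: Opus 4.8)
The plan is to rerun the argument of Theorem \ref{thmHLPgen} with two substitutions: replace the convexity gradient inequality (\ref{grad_ineq}) by the descent-type gradient inequality (\ref{smooth_grad}) available for $\sigma$-smooth functions, and replace the isotonicity of $\Phi^{\prime}$ by its antitonicity. Since (\ref{smooth_grad}) is an \emph{upper} bound (whereas (\ref{grad_ineq}) is a lower bound) and the monotonicity of $\Phi^{\prime}$ is reversed, every inequality in the chain flips and one lands on (\ref{maj1sm}) instead of (\ref{Cons1}). The whole scheme is thus: apply the gradient inequality pairwise, average with the weights $\lambda_{k}$, rearrange by Abel's summation-by-parts, and read off the sign of the resulting telescoping sum from the order structure.

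Concretely, for $\prec_{L^{\downarrow}}$ I would apply (\ref{smooth_grad}) to each pair $(\mathbf{x}_{k},\mathbf{y}_{k})$, multiply by $\lambda_{k}$ and sum, obtaining
\[
\sum_{k=1}^{N}\lambda_{k}\Phi(\mathbf{y}_{k})-\sum_{k=1}^{N}\lambda_{k}\Phi(\mathbf{x}_{k})-\frac{\sigma}{2}\sum_{k=1}^{N}\lambda_{k}\left\Vert \mathbf{x}_{k}-\mathbf{y}_{k}\right\Vert ^{2}\leq\sum_{k=1}^{N}\Phi^{\prime}(\mathbf{x}_{k})(\lambda_{k}\mathbf{y}_{k}-\lambda_{k}\mathbf{x}_{k}).
\]
Then the same summation-by-parts used for Theorem \ref{thmHLPgen} rewrites the right-hand side as
\[
\sum_{m=1}^{N-1}\bigl(\Phi^{\prime}(\mathbf{x}_{m})-\Phi^{\prime}(\mathbf{x}_{m+1})\bigr)\sum_{k=1}^{m}\lambda_{k}(\mathbf{y}_{k}-\mathbf{x}_{k})+\Phi^{\prime}(\mathbf{x}_{N})\sum_{k=1}^{N}\lambda_{k}(\mathbf{y}_{k}-\mathbf{x}_{k}).
\]
Each partial sum $\sum_{k=1}^{m}\lambda_{k}(\mathbf{y}_{k}-\mathbf{x}_{k})$ is positive by (\ref{maj1}); since $\mathbf{x}_{m}\geq\mathbf{x}_{m+1}$ and $\Phi^{\prime}$ is antitone, each difference $\Phi^{\prime}(\mathbf{x}_{m})-\Phi^{\prime}(\mathbf{x}_{m+1})$ is a negative operator, so every term of the telescoping sum is $\leq0$; and the boundary term vanishes by the balance condition (\ref{maj2}). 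Hence the right-hand side is $\leq0$, which is exactly (\ref{maj1sm}).

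For $\prec_{R^{\uparrow}}$ the monotone string now sits on the right-hand measure, so I would recenter (\ref{smooth_grad}) at the points $\mathbf{y}_{k}$ and carry out the identical rearrangement against the partial sums $\sum_{k=1}^{m}\lambda_{k}(\mathbf{x}_{k}-\mathbf{y}_{k})$, which are negative by (\ref{maj1}); the ordering $\mathbf{y}_{m}\leq\mathbf{y}_{m+1}$ together with the antitonicity of $\Phi^{\prime}$ again gives the telescoping sum the correct sign, while (\ref{maj2}) kills the boundary term, yielding the companion inequality (\ref{maj2sm}). For the weak relations $\prec_{wL^{\downarrow}}$ and $\prec_{wR^{\uparrow}}$ the balance condition (\ref{maj2}) is unavailable, so the boundary term $\Phi^{\prime}(\mathbf{x}_{N})\sum_{k=1}^{N}\lambda_{k}(\mathbf{y}_{k}-\mathbf{x}_{k})$ (respectively its $\prec_{wR^{\uparrow}}$ analogue) survives; the extra hypothesis that $\Phi$ is antitone is precisely what controls it. Indeed, by the ``only if'' half of Amann's criterion recalled in the proof of Lemma \ref{lemAmann_conv} — which uses only the definition of the G\^{a}teaux derivative and not convexity — antitonicity of $\Phi$ forces $\Phi^{\prime}(\mathbf{x}_{N})\leq0$, while the total $\sum_{k=1}^{N}\lambda_{k}(\mathbf{y}_{k}-\mathbf{x}_{k})\geq0$ is the $n=N$ instance of (\ref{maj1}), so their pairing is $\leq0$ and the inequality is preserved.

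I expect the one genuine subtlety to be the sign bookkeeping in the ordered-Banach-space setting. One must remember that ``antitone differential'' means each increment $\Phi^{\prime}(\mathbf{x}_{m})-\Phi^{\prime}(\mathbf{x}_{m+1})$ is a \emph{positive or negative operator} in the order of $\mathcal{L}(E,F)$ — not merely a pointwise scalar inequality — so that it may be legitimately composed with the positive or negative partial-sum vectors produced by the majorization conditions, the sign of the composite being the product of the two signs. The second point of care is to center (\ref{smooth_grad}) at the variable that carries the monotone string ($\mathbf{x}_{k}$ for the $L^{\downarrow}$ relations, $\mathbf{y}_{k}$ for the $R^{\uparrow}$ relations), so that the Abel rearrangement evaluates $\Phi^{\prime}$ along an ordered sequence and the telescoping differences inherit a definite sign.
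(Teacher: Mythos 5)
Your overall strategy is exactly the intended one: the paper states Theorem \ref{thmdualmaj} without a separate proof, as the $\sigma$-smooth dual of Theorem \ref{thmHLPgen} obtained by trading the gradient inequality (\ref{grad_ineq}) for the descent inequality (\ref{smooth_grad}) and isotonicity of $\Phi^{\prime}$ for antitonicity, and your $\prec_{L^{\downarrow}}$ argument (Abel summation, telescoping differences $\Phi^{\prime}(\mathbf{x}_{m})-\Phi^{\prime}(\mathbf{x}_{m+1})\leq0$ paired with nonnegative partial sums, boundary term killed by (\ref{maj2})) is correct and complete. One bookkeeping point on the $\prec_{R^{\uparrow}}$ case: what your computation actually yields is
\[
\sum_{k=1}^{N}\lambda_{k}\Phi(\mathbf{x}_{k})\leq\sum_{k=1}^{N}\lambda_{k}\Phi(\mathbf{y}_{k})+\frac{\sigma}{2}\sum_{k=1}^{N}\lambda_{k}\left\Vert \mathbf{x}_{k}-\mathbf{y}_{k}\right\Vert ^{2},
\]
which is the correct dual statement but is \emph{not} the printed inequality (\ref{maj2sm}): that formula involves a modulus $\omega$ that occurs nowhere in the hypotheses of the theorem (it is a leftover from (\ref{Cons1})) and it places $\Phi(\mathbf{x}_{k})$ and $\Phi(\mathbf{y}_{k})$ on the opposite sides. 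You should flag this discrepancy as a defect of the statement rather than assert that your inequality ``is'' (\ref{maj2sm}); as written, (\ref{maj2sm}) cannot be what the argument proves.

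The genuine gap is in the weak case $\prec_{wR^{\uparrow}}$, i.e., precisely in the sign bookkeeping you single out as the main subtlety. For $\prec_{wL^{\downarrow}}$ your reasoning is sound: antitonicity of $\Phi$ forces $\Phi^{\prime}(\mathbf{x}_{N})\leq0$, the total $\sum_{k=1}^{N}\lambda_{k}(\mathbf{y}_{k}-\mathbf{x}_{k})$ is $\geq0$ by (\ref{maj1}), and a negative operator applied to a positive vector is $\leq0$. But in the $\prec_{wR^{\uparrow}}$ case the surviving boundary term is $\Phi^{\prime}(\mathbf{y}_{N})\sum_{k=1}^{N}\lambda_{k}(\mathbf{x}_{k}-\mathbf{y}_{k})$, where now the vector is $\leq0$; with $\Phi^{\prime}(\mathbf{y}_{N})\leq0$ (antitone $\Phi$) the product of two negative signs is $\geq0$ --- the wrong sign --- so the estimate does not close. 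This is not a repairable slip in the write-up: with $\Phi$ antitone the asserted conclusion is false. Take $E=F=\mathbb{R}$, $N=1$, $\Phi(t)=-t$ (which is $\sigma$-smooth, antitone, and has antitone differential) and $x_{1}<y_{1}$, so that $\prec_{wR^{\uparrow}}$ holds; the inequality you claim becomes $y_{1}-x_{1}\leq\frac{\sigma}{2}(y_{1}-x_{1})^{2}$, which fails whenever $0<y_{1}-x_{1}<2/\sigma$. What the boundary term actually requires in the $\prec_{wR^{\uparrow}}$ case is $\Phi^{\prime}(\mathbf{y}_{N})\geq0$, that is, $\Phi$ \emph{isotone} (with $\Phi^{\prime}$ still antitone). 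The paper's own statement is defective on the same point, but your proposal asserts that the step goes through as written, and it does not.
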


One might wonder if the majorization relations $\prec_{L^{\downarrow}}$and
$\prec_{R^{\uparrow}}$ can be reformulated in terms of doubly stochastic
matrices (as, for example, $\mathbf{x}\prec_{L^{\downarrow}}\mathbf{y}$ if and
only if $\mathbf{x}=P\mathbf{y}$ for some doubly stochastic matrix $P$). The
answer is negative as shows the case of two pairs of elements%
\[
\mathbf{x}_{1}\geq\mathbf{x}_{2}\text{ and }\mathbf{y}_{1}\geq\mathbf{y}%
_{2}=0
\]
such that
\[
\mathbf{x}_{1}\leq\mathbf{y}_{1}\text{ and }\left(  \mathbf{x}_{1}%
+\mathbf{x}_{2}\right)  /2=\left(  \mathbf{y}_{1}+\mathbf{y}_{2}\right)  /2.
\]
Clearly, no $2\times2$-dimensional real matrix $A$ could exist such that
\[
\left(
\begin{array}
[c]{c}%
\mathbf{x}_{1}\\
\mathbf{x}_{2}%
\end{array}
\right)  =A\left(
\begin{array}
[c]{c}%
\mathbf{y}_{1}\\
\mathbf{y}_{2}%
\end{array}
\right)  .
\]
This would imply that $\mathbf{x}_{1},\mathbf{x}_{2}$ belong necessarily to
the segment $[\mathbf{y}_{2}.\mathbf{y}_{1}],$ which is not the case.

However, one implication is true.

\begin{remark}
If $\mathbf{x}_{1}\geq\cdots\geq\mathbf{x}_{N}$ and $\mathbf{y}_{1}\geq
\dots\geq\mathbf{y}_{N}$ are two families of points in the ordered Banach
space $E$ such that%
\[
P\left(
\begin{array}
[c]{c}%
\mathbf{y}_{1}\\
\vdots\\
\mathbf{y}_{N}%
\end{array}
\right)  =\left(
\begin{array}
[c]{c}%
\mathbf{x}_{1}\\
\vdots\\
\mathbf{x}_{N}%
\end{array}
\right)
\]
for a suitable doubly stochastic matrix $P$, then%
\[
\frac{1}{N}\sum_{k=1}^{N}\delta_{\mathbf{x}_{k}}\prec_{L^{\downarrow}}\frac
{1}{N}\sum_{k=1}^{N}\delta_{\mathbf{y}_{k}}.
\]
Indeed, the argument used by Ostrowski $($see \emph{\cite{MOA2011}}, Theorem
$A.4$, p. $31)$ to settle the case $E=\mathbb{R}$ extends verbatim to the case
of ordered Banach spaces.
\end{remark}

In the context of functions of several variables, one can take advantage of
the approximation Theorem \ref{thmappr} to prove the following variant of
Theorem \ref{thmHLPgen}, where the assumption on differentiability is discarded.

\begin{theorem}
\label{thmHLPgen2}Suppose that $C$ is an open box included in $\mathbb{R}%
_{++}^{N}$ and $\sum_{k=1}^{N}\lambda_{k}\delta_{\mathbf{x}_{k}}$ and
$\sum_{k=1}^{N}\lambda_{k}\delta_{\mathbf{y}_{k}}$ are two discrete
probability measures supported at points in $C.$

If $\sum_{k=1}^{N}\lambda_{k}\delta_{\mathbf{x}_{k}}\prec_{L^{\downarrow}}%
\sum_{k=1}^{N}\lambda_{k}\delta_{\mathbf{y}_{k}},$ then%
\[
\sum\nolimits_{k=1}^{n}\lambda_{k}\Phi(\mathbf{y}_{k})\geq\sum\nolimits_{k=1}%
^{n}\lambda_{k}\Phi(\mathbf{x}_{k})
\]
for every $2$-box monotone convex function $\Phi:C\rightarrow F,$ while if

$\sum_{k=1}^{N}\lambda_{k}\delta_{\mathbf{x}_{k}}\prec_{R^{\uparrow}}%
\sum_{k=1}^{N}\lambda_{k}\delta_{\mathbf{y}_{k}},$ the latter inequality works
in the opposite direction.
\end{theorem}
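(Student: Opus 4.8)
The plan is to derive this non-differentiable statement from the differentiable Theorem \ref{thmHLPgen} via the approximation Theorem \ref{thmappr}, after first reducing the vector-valued problem to a scalar one. By Remark \ref{rem1}, in an ordered Banach space $F$ with generating cone one has $u\leq v$ if and only if $w^{\ast}(u)\leq w^{\ast}(v)$ for every $w^{\ast}\in F_{+}^{\ast}$; moreover, since each such $w^{\ast}$ is linear and order preserving, the composite $w^{\ast}\circ\Phi$ inherits both convexity and $2$-box monotonicity from $\Phi$, because $\Delta(w^{\ast}\circ\Phi;B_{ij})=w^{\ast}(\Delta(\Phi;B_{ij}))\geq0$. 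Hence it suffices to establish the asserted inequality after applying an arbitrary $w^{\ast}\in F_{+}^{\ast}$, that is, for a real-valued $2$-box monotone convex function.

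Next I would exploit the finiteness of the supports: fix a compact box $K\subset C$ containing all the points $\mathbf{x}_{1},\dots,\mathbf{x}_{N},\mathbf{y}_{1},\dots,\mathbf{y}_{N}$. Applying Theorem \ref{thmappr} on $K$ produces a sequence $(\Phi_{j})_{j}$ of infinitely differentiable strongly convex functions with isotone differentials such that $\Phi_{j}\rightarrow\Phi$ uniformly on $K$. Each $\Phi_{j}$ is G\^{a}teaux differentiable and convex with isotone differential, so Theorem \ref{thmHLPgen} applies to it directly; discarding the nonnegative modulus term (a strongly convex function is $\omega$-convex with $\omega\geq0$, which only strengthens the estimate) gives, in the $\prec_{L^{\downarrow}}$ case,
\[
\sum_{k=1}^{N}\lambda_{k}\Phi_{j}(\mathbf{y}_{k})\geq\sum_{k=1}^{N}\lambda_{k}\Phi_{j}(\mathbf{x}_{k})\qquad\text{for every }j.
\]
Since $K$ captures every support point and $\Phi_{j}\rightarrow\Phi$ uniformly on $K$, letting $j\rightarrow\infty$ preserves the inequality and yields the conclusion for $\Phi$. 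The $\prec_{R^{\uparrow}}$ case follows identically from the reversed inequality furnished by Theorem \ref{thmHLPgen}. Should one want the sharper partial-sum conclusion, one invokes in addition the isotonicity clause of Theorem \ref{thmappr}---available precisely because $C\subset\mathbb{R}_{++}^{N}$---so that the approximants $\Phi_{j}$ may be taken isotone and the partial-sum inequality $(\ref{Cons2})$ becomes applicable before passing to the limit.

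The main obstacle is conceptual rather than computational: Theorem \ref{thmHLPgen} presupposes both differentiability and isotonicity of the differential, neither of which a bare $2$-box monotone convex function need possess. The whole argument therefore rests on Theorem \ref{thmappr} delivering approximants that are \emph{simultaneously} smooth and endowed with isotone differentials; once these are in hand the limiting step is routine, using only uniform convergence on the compact box $K$. The one point deserving care is that mollification must not destroy $2$-box monotonicity, but this is exactly guaranteed by the convolution identity $\Delta(\Psi\ast\varphi;B_{ij})=\int\Delta(\Psi(\mathbf{x}-\mathbf{y});B_{ij})\varphi(\mathbf{y})\,\mathrm{d}\mathbf{y}\geq0$ established in the proof of Theorem \ref{thmappr}, together with the choice of $K$ interior to the open box $C$ so that the approximation is valid on a neighborhood of all support points.
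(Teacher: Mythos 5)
Your proposal is correct and follows essentially the same route as the paper: fix a compact box $K$ containing all support points, approximate $\Phi$ by the infinitely differentiable strongly convex functions with isotone differentials supplied by Theorem \ref{thmappr}, apply Theorem \ref{thmHLPgen} to each approximant (the nonnegative modulus term being discarded), and pass to the limit using uniform convergence on $K$. The only difference is your explicit scalarization through $w^{\ast}\in F_{+}^{\ast}$ via Remark \ref{rem1}, which the paper leaves implicit (its proof estimates $\left\vert \Phi(\mathbf{x})-\Psi_{\varepsilon}(\mathbf{x})\right\vert$ as if $\Phi$ were real-valued); this is a sensible tightening for vector-valued $F$, not a different method.
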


\begin{proof}
Suppose that $\sum_{k=1}^{N}\lambda_{k}\delta_{\mathbf{x}_{k}}\prec
_{L^{\downarrow}}\sum_{k=1}^{N}\lambda_{k}\delta_{\mathbf{y}_{k}}$ and choose
a compact box $K\subset C$ that contains all points $\mathbf{x}_{k}$ and
$\mathbf{y}_{k}.$ According to Theorem \ref{thmappr}, for $\varepsilon>0$
arbitrarily fixed, there is an infinitely differentiable, convex and isotone
function $\Psi_{\varepsilon}$ with isotone differential, such that
$\sup_{\mathbf{x}\in K}\left\vert \Phi(\mathbf{x})-\Psi_{\varepsilon
}(\mathbf{x})\right\vert <\varepsilon.$ Taking into account Theorem
\ref{thmHLPgen}, we infer that%
\[
\sum_{k=1}^{N}\lambda_{k}\Psi_{\varepsilon}(\mathbf{y}_{k})\geq\sum_{k=1}%
^{N}\lambda_{k}\Psi_{\varepsilon}(\mathbf{x}_{k}).
\]
Then%
\[
\sum_{k=1}^{N}\lambda_{k}\Phi(\mathbf{y}_{k})\geq\sum_{k=1}^{N}\lambda_{k}%
\Phi(\mathbf{x}_{k})-2\varepsilon.
\]
As $\varepsilon>0$ was arbitrarily fixed, we conclude that
\[
\sum_{k=1}^{N}\lambda_{k}\Phi(\mathbf{y}_{k})\geq\sum_{k=1}^{N}\lambda_{k}%
\Phi(\mathbf{x}_{k}).
\]
The case when $\sum_{k=1}^{N}\lambda_{k}\delta_{\mathbf{x}_{k}}\prec
_{R^{\uparrow}}\sum_{k=1}^{N}\lambda_{k}\delta_{\mathbf{y}_{k}}$ can be
treated similarly.
\end{proof}

\begin{remark}
\label{remsuf}$($The isotonicity of the differential is not only sufficient
but also necessary for the validity of Theorem \emph{\ref{thmHLPgen} }and
Theorem \emph{\ref{thmHLPgen2}}$)$ As was already noticed in Remark
\emph{\ref{rem4}}, the infinitely differentiable function%
\[
\Phi(x,y)=-2\left(  xy\right)  ^{1/2},\quad x,y\in(0,\infty),
\]
is convex and its differential%
\[
d\Phi:\mathbb{R}_{++}^{2}\rightarrow\mathbb{R}^{2},\text{\quad}d\Phi
(x,y)=-(x^{-1/2}y^{1/2},x^{1/2}y^{-1/2})
\]
is not isotone. Therefore $($see Lemma\emph{ \ref{lem2box}}$)$, the function
$\Phi$ is not $2$-box monotone. Consider the points $\mathbf{x}_{1}%
=(3/2,1)>\mathbf{x}_{2}=(1/2,1)$ and $\mathbf{y}_{1}=(2,2)>\mathbf{y}%
_{2}=(0,0)$. Then%
\[
\mathbf{x}_{1}\leq\mathbf{y}_{1}\text{ and }\left(  \mathbf{x}_{1}%
+\mathbf{x}_{2}\right)  /2=\left(  \mathbf{y}_{1}+\mathbf{y}_{2}\right)  /2,
\]
but%
\[
\Phi(\mathbf{x}_{1})+\Phi(\mathbf{x}_{2})=-2\left(  3/2\right)  ^{1/2}%
-2\left(  1/2\right)  ^{1/2}\approx-3.\,\allowbreak863\,7>\Phi(\mathbf{y}%
_{1})+\Phi(\mathbf{y}_{2})=\allowbreak-4.
\]
Therefore $\Phi$ fails the conclusion of Theorem \emph{\ref{thmHLPgen}}.
\end{remark}

\begin{remark}
In the variant of weak majorization, the assertions of Theorem
\emph{\ref{thmHLPgen2}} remain valid for the $2$-box monotone, isotone and
convex functions defined on an open box included in $\mathbb{R}^{N}.$ Indeed,
in this case the approximants $\Psi_{\varepsilon}$ $($that appear in the proof
of Theorem \emph{\ref{thmappr}}$)$ are not only $2$-box monotone and strictly
convex but also isotone.
\end{remark}

\section{Applications}

The following consequence of Theorem \ref{thmHLPgen} shows that the gap in
Jensen's inequality (the difference of the two sides of this inequality)
decreases when the order interval under attention is shrinking.

\begin{theorem}
\label{thmgapJ}$($The contractibility of Jensen's gap$)$ Suppose that $E$ and
$F$ are ordered Banach spaces, $C$ is an open convex subset of $E$ and
$\Phi:C\rightarrow F$ is a differentiable convex function whose differential
is isotone on $C$. Then for every family of points $\mathbf{x}_{1}%
,\mathbf{x}_{2},\mathbf{y}_{1},\mathbf{y}_{2}$ in $C$ and any $\lambda
\in(0,1)$ such that%
\[
\mathbf{y}_{2}\leq\mathbf{x}_{2}\leq(1-\lambda)\mathbf{y}_{1}+\lambda
\mathbf{y}_{2}\leq\mathbf{x}_{1}\leq\mathbf{y}_{1}.
\]
we have%
\begin{multline*}
0\leq(1-\lambda)\Phi(\mathbf{x}_{1})+\lambda\Phi(\mathbf{x}_{2})-\Phi\left(
(1-\lambda)\mathbf{x}_{1}+\lambda\mathbf{x}_{2}\right) \\
\leq(1-\lambda)\Phi(\mathbf{y}_{1})+\lambda\Phi(\mathbf{y}_{2})-\Phi\left(
(1-\lambda)\mathbf{y}_{1}+\lambda\mathbf{y}_{2}\right)  .
\end{multline*}

\end{theorem}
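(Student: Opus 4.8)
The left-hand inequality is nothing but Jensen's inequality applied to the convex function $\Phi$, so the whole content lies in the right-hand inequality, which asserts that Jensen's gap at $(\mathbf{x}_1,\mathbf{x}_2)$ does not exceed the gap at $(\mathbf{y}_1,\mathbf{y}_2)$. The plan is to recast this single gap comparison as one instance of Theorem \ref{thmHLPgen}. Set
\[
\mathbf{p}=(1-\lambda)\mathbf{x}_1+\lambda\mathbf{x}_2,\qquad
\mathbf{q}=(1-\lambda)\mathbf{y}_1+\lambda\mathbf{y}_2,
\]
both lying in $C$ by convexity. Transposing the terms $\Phi(\mathbf{p})$ and $\Phi(\mathbf{q})$ to opposite sides, the desired inequality is equivalent to
\[
(1-\lambda)\Phi(\mathbf{y}_1)+\lambda\Phi(\mathbf{y}_2)+\Phi(\mathbf{p})
\geq (1-\lambda)\Phi(\mathbf{x}_1)+\lambda\Phi(\mathbf{x}_2)+\Phi(\mathbf{q}).
\]
After dividing by $2$, this takes the form $\sum_{k=1}^{3}\mu_k\Phi(\mathbf{w}_k)\geq\sum_{k=1}^{3}\mu_k\Phi(\mathbf{z}_k)$ with the weights $\mu_1=(1-\lambda)/2$, $\mu_2=1/2$, $\mu_3=\lambda/2$, which are positive and sum to $1$.

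The key step is to choose the two auxiliary three-point probability measures so that they stand in the relation $\prec_{L^{\downarrow}}$. I take the (majorized) left measure to be supported at $\mathbf{z}_1=\mathbf{x}_1$, $\mathbf{z}_2=\mathbf{q}$, $\mathbf{z}_3=\mathbf{x}_2$, and the right measure at $\mathbf{w}_1=\mathbf{y}_1$, $\mathbf{w}_2=\mathbf{p}$, $\mathbf{w}_3=\mathbf{y}_2$, with the common weights $\mu_k$. The chain hypothesis $\mathbf{x}_2\leq\mathbf{q}\leq\mathbf{x}_1$ shows that the support of the left measure is a decreasing string, so condition (\ref{x}) holds. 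For the partial-sum condition (\ref{maj1}) only two cases are nontrivial: at $n=1$ it reduces to $\mathbf{x}_1\leq\mathbf{y}_1$, while at $n=2$ a direct computation, using $\mathbf{p}-\mathbf{q}=(1-\lambda)(\mathbf{x}_1-\mathbf{y}_1)+\lambda(\mathbf{x}_2-\mathbf{y}_2)$, gives
\[
\Bigl[\tfrac{1-\lambda}{2}\mathbf{y}_1+\tfrac{1}{2}\mathbf{p}\Bigr]
-\Bigl[\tfrac{1-\lambda}{2}\mathbf{x}_1+\tfrac{1}{2}\mathbf{q}\Bigr]
=\tfrac{\lambda}{2}\,(\mathbf{x}_2-\mathbf{y}_2)\geq 0,
\]
the contributions in $\mathbf{y}_1-\mathbf{x}_1$ cancelling exactly. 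Finally the total masses coincide, both equalling $\tfrac{1}{2}(\mathbf{p}+\mathbf{q})$, which is condition (\ref{maj2}). Thus $\sum_{k=1}^{3}\mu_k\delta_{\mathbf{z}_k}\prec_{L^{\downarrow}}\sum_{k=1}^{3}\mu_k\delta_{\mathbf{w}_k}$.

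Applying Theorem \ref{thmHLPgen} to $\Phi$ — a convex function with isotone differential, i.e. the case $\omega\equiv0$ — then yields $\sum_k\mu_k\Phi(\mathbf{w}_k)\geq\sum_k\mu_k\Phi(\mathbf{z}_k)$, which is precisely the right-hand inequality after clearing the factor $2$. The only real obstacle is to spot the correct three-point auxiliary measures together with their weights; once these are fixed, the verification of $\prec_{L^{\downarrow}}$ is routine, the crux being the cancellation at $n=2$ producing $\tfrac{\lambda}{2}(\mathbf{x}_2-\mathbf{y}_2)$, which is exactly the place where the ordering hypotheses $\mathbf{x}_1\leq\mathbf{y}_1$ and $\mathbf{y}_2\leq\mathbf{x}_2$ are consumed.
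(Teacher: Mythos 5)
Your proof is correct and is essentially identical to the paper's own argument: the paper uses exactly the same three-point measures $\tfrac{1-\lambda}{2}\delta_{\mathbf{x}_{1}}+\tfrac{1}{2}\delta_{(1-\lambda)\mathbf{y}_{1}+\lambda\mathbf{y}_{2}}+\tfrac{\lambda}{2}\delta_{\mathbf{x}_{2}}\prec_{L^{\downarrow}}\tfrac{1-\lambda}{2}\delta_{\mathbf{y}_{1}}+\tfrac{1}{2}\delta_{(1-\lambda)\mathbf{x}_{1}+\lambda\mathbf{x}_{2}}+\tfrac{\lambda}{2}\delta_{\mathbf{y}_{2}}$, verifies the same partial-sum and total-sum conditions, and then invokes Theorem \ref{thmHLPgen} with $\omega\equiv 0$. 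Your cancellation at $n=2$ yielding $\tfrac{\lambda}{2}(\mathbf{x}_{2}-\mathbf{y}_{2})\geq 0$ is the same computation the paper displays as its chain of inequalities.
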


\begin{proof}
Indeed, under the above hypotheses, we have $\mathbf{x}_{1}\geq(1-\lambda
)\mathbf{y}_{1}+\lambda\mathbf{y}_{2}\geq\mathbf{x}_{2}$ and also%
\begin{align*}
\frac{1-\lambda}{2}\mathbf{x}_{1}  &  \leq\frac{1-\lambda}{2}\mathbf{y}_{1}\\
\frac{1-\lambda}{2}\mathbf{x}_{1}+\frac{1}{2}\left(  (1-\lambda)\mathbf{y}%
_{1}+\lambda\mathbf{y}_{2}\right)   &  \leq\frac{1-\lambda}{2}\mathbf{y}%
_{1}+\frac{1}{2}\left(  (1-\lambda)\mathbf{x}_{1}+\lambda\mathbf{x}_{2}\right)
\\
\frac{1-\lambda}{2}\mathbf{x}_{1}+\frac{1}{2}\left(  (1-\lambda)\mathbf{y}%
_{1}+\lambda\mathbf{y}_{2}\right)  +\frac{\lambda}{2}\mathbf{x}_{2}  &
=\frac{1-\lambda}{2}\mathbf{y}_{1}+\left(  (1-\lambda)\mathbf{x}_{1}%
+\lambda\mathbf{x}_{2}\right)  +\lambda\mathbf{y}_{2}.
\end{align*}
Therefore%
\[
\frac{1-\lambda}{2}\delta_{\mathbf{x}_{1}}+\frac{1}{2}\delta_{(1-\lambda
)\mathbf{y}_{1}+\lambda\mathbf{y}_{2}}+\frac{\lambda}{2}\delta_{\mathbf{x}%
_{2}}\prec_{\mathbf{L^{\downarrow}}}\frac{1-\lambda}{2}\delta_{\mathbf{y}_{1}%
}+\frac{1}{2}\delta_{(1-\lambda)\mathbf{x}_{1}+\lambda\mathbf{x}_{2}}%
+\frac{\lambda}{2}\delta_{\mathbf{y}_{2}}%
\]

so that, taking into account Theorem \ref{thmHLPgen}, we infer that%
\begin{multline*}
(1-\lambda)\Phi(\mathbf{x}_{1})+\Phi\left(  (1-\lambda)\mathbf{y}_{1}%
+\lambda\mathbf{y}_{2}\right)  +\lambda\Phi(\mathbf{x}_{2})\\
\leq(1-\lambda)\Phi(\mathbf{y}_{1})+\Phi\left(  (1-\lambda)\mathbf{x}%
_{1}+\lambda\mathbf{x}_{2}\right)  +\lambda\Phi(\mathbf{y}_{2}),
\end{multline*}
an inequality that is equivalent to the conclusion of Theorem \ref{thmgapJ}.
\end{proof}

A particular case of Theorem \ref{thmgapJ} is as follows:

\begin{corollary}
\label{corpar}$($The parallelogram rule$)$ Suppose that $\Phi:C\rightarrow F$
is as in the statement of Theorem \ref{thmgapJ} and $\mathbf{x}_{1}%
,\mathbf{x}_{2},\mathbf{y}_{1}$ and $\mathbf{y}_{2}~$are points in $C$ such
that $\mathbf{y}_{2}\leq\mathbf{x}_{2}\leq\mathbf{x}_{1}\leq\mathbf{y}_{1}$
and $\left(  \mathbf{x}_{1}+\mathbf{x}_{2}\right)  /2=\left(  \mathbf{y}%
_{1}+\mathbf{y}_{2}\right)  /2$ then the following extension of the
\emph{parallelogram law} takes place:
\[
\Phi(\mathbf{x}_{1})+\Phi(\mathbf{x}_{2})\leq\Phi(\mathbf{y}_{1}%
)+\Phi(\mathbf{y}_{2}).
\]

\end{corollary}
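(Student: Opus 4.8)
The plan is to obtain Corollary \ref{corpar} as the special case $\lambda=1/2$ of Theorem \ref{thmgapJ}, so the only work is to verify that theorem's hypothesis chain under this choice and then to watch the Jensen correction terms cancel. Set $\mathbf{m}=(\mathbf{y}_{1}+\mathbf{y}_{2})/2$. The assumption $(\mathbf{x}_{1}+\mathbf{x}_{2})/2=(\mathbf{y}_{1}+\mathbf{y}_{2})/2$ says precisely that $\mathbf{m}$ is also the midpoint of the $\mathbf{x}$'s, so the two midpoints coincide. The outer inequalities $\mathbf{y}_{2}\leq\mathbf{x}_{2}$ and $\mathbf{x}_{1}\leq\mathbf{y}_{1}$ are given; what remains is to slot $\mathbf{m}$ into the chain, i.e. to establish the sandwich $\mathbf{x}_{2}\leq\mathbf{m}\leq\mathbf{x}_{1}$.

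The one genuinely computational step is this sandwich, and it is where I would be slightly careful to use only the ordered-vector-space structure (the cone being stable under addition and nonnegative scaling). Since $\mathbf{x}_{2}\leq\mathbf{x}_{1}$, adding $\mathbf{x}_{2}$ to both sides and halving gives $\mathbf{x}_{2}=(\mathbf{x}_{2}+\mathbf{x}_{2})/2\leq(\mathbf{x}_{1}+\mathbf{x}_{2})/2=\mathbf{m}$, and symmetrically $\mathbf{m}=(\mathbf{x}_{1}+\mathbf{x}_{2})/2\leq(\mathbf{x}_{1}+\mathbf{x}_{1})/2=\mathbf{x}_{1}$. Combining with the hypotheses yields
\[
\mathbf{y}_{2}\leq\mathbf{x}_{2}\leq\mathbf{m}=\left(1-\tfrac{1}{2}\right)\mathbf{y}_{1}+\tfrac{1}{2}\mathbf{y}_{2}\leq\mathbf{x}_{1}\leq\mathbf{y}_{1},
\]
which is exactly the hypothesis of Theorem \ref{thmgapJ} with $\lambda=1/2$.

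Finally I would invoke Theorem \ref{thmgapJ}. Specialized to $\lambda=1/2$, its conclusion reads
\[
0\leq\tfrac{1}{2}\Phi(\mathbf{x}_{1})+\tfrac{1}{2}\Phi(\mathbf{x}_{2})-\Phi\!\left(\tfrac{1}{2}\mathbf{x}_{1}+\tfrac{1}{2}\mathbf{x}_{2}\right)\leq\tfrac{1}{2}\Phi(\mathbf{y}_{1})+\tfrac{1}{2}\Phi(\mathbf{y}_{2})-\Phi\!\left(\tfrac{1}{2}\mathbf{y}_{1}+\tfrac{1}{2}\mathbf{y}_{2}\right).
\]
The decisive simplification is that the two subtracted terms are identical, since $\tfrac{1}{2}\mathbf{x}_{1}+\tfrac{1}{2}\mathbf{x}_{2}=\mathbf{m}=\tfrac{1}{2}\mathbf{y}_{1}+\tfrac{1}{2}\mathbf{y}_{2}$. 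Cancelling the common value $\Phi(\mathbf{m})$ from both sides of the right-hand inequality and multiplying by $2$ gives $\Phi(\mathbf{x}_{1})+\Phi(\mathbf{x}_{2})\leq\Phi(\mathbf{y}_{1})+\Phi(\mathbf{y}_{2})$, the asserted parallelogram rule. I do not expect any real obstacle beyond the midpoint sandwich: the statement is essentially a bookkeeping specialization in which the equal-midpoints hypothesis plays a double role, both forcing the $\prec_{L^{\downarrow}}$ balance needed to enter Theorem \ref{thmgapJ} and making the two Jensen correction terms coincide so that they cancel.
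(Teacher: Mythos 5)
Your proof is correct and is exactly the paper's intended argument: the paper presents Corollary \ref{corpar} as a particular case of Theorem \ref{thmgapJ}, namely the specialization $\lambda=1/2$, where the coinciding midpoints make the two Jensen correction terms $\Phi\left((\mathbf{x}_{1}+\mathbf{x}_{2})/2\right)$ and $\Phi\left((\mathbf{y}_{1}+\mathbf{y}_{2})/2\right)$ cancel. Your verification of the sandwich $\mathbf{x}_{2}\leq(\mathbf{y}_{1}+\mathbf{y}_{2})/2\leq\mathbf{x}_{1}$ from $\mathbf{x}_{2}\leq\mathbf{x}_{1}$ and the equal-midpoints hypothesis is the only detail the paper leaves implicit, and you supply it correctly.
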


\begin{remark}
$($A multiplicative version of the generalized parallelogram law$)$ Suppose
that $A_{1},A_{2},B_{1},B_{2}~$are positively definite matrices from
$\operatorname*{Sym}(N,\mathbb{R})$ such that
\[
B_{2}\leq A_{2}\leq A_{1}\leq B_{1},\text{\quad}A_{1}A_{2}=A_{2}%
A_{1},\text{\quad}B_{1}B_{2}=B_{2}B_{1},\text{ }%
\]
and $\left(  A_{1}A_{2}\right)  ^{1/2}=\left(  B_{1}B_{2}\right)  ^{1/2}.$
Since the logarithm is an operator monotone function $($see \emph{\cite{Hiai}%
}$)$, we have
\[
\log B_{2}\leq\log A_{2}\leq\log A_{1}\leq\log B_{1}\text{ and }\log
A_{1}+\log A_{2}=\log B_{1}+\log B_{2}.
\]
From Example \emph{\ref{ex_trace} (}presented in Appendix\emph{)} and
Corollary \emph{\ref{corpar} }$($applied to\emph{ }$\operatorname*{trace}%
f(\exp(A))$ we infer that%
\[
\operatorname*{trace}f(A_{1})+\operatorname*{trace}f(A_{2})\leq
\operatorname*{trace}f(B_{1})+\operatorname*{trace}f(B_{2}),
\]
whenever $f:(0,\infty)\mathbb{\rightarrow R}$ is a continuously differentiable
and nondecreasing function such that $f\circ\exp$ is convex.
\end{remark}

\begin{remark}
$($Another variant of the generalized parallelogram law$)$ Suppose that $E$
and $F$ are ordered Banach spaces, $C$ is an open convex subset of $E$ and
$\Phi:C\rightarrow F$ is a differentiable, isotone and convex function whose
differential is isotone on $C$. Then for every family of points $\mathbf{x}%
_{1},\mathbf{x}_{2},\mathbf{y}_{1},\mathbf{y}_{2}$ in $E_{+}$ such that
\[
\mathbf{x}_{2}\leq\mathbf{x}_{1}\leq\mathbf{y}_{1}\text{ and }\mathbf{x}%
_{1}+\mathbf{x}_{2}\leq\mathbf{y}_{1}+\mathbf{y}_{2}%
\]
we have
\[
\Phi(\mathbf{x}_{1})+\Phi(\mathbf{x}_{2})\leq\Phi(\mathbf{y}_{1}%
)+\Phi(\mathbf{y}_{2}).
\]
Indeed, in this case $\mathbf{x}_{1}\leq\mathbf{y}_{1}$ and $\mathbf{x}%
_{1}+\mathbf{x}_{2}\leq\mathbf{y}_{1}+\mathbf{y}_{2}.$ Though $x_{1}%
+x_{2}=y_{1}+y_{2}$ could fail, Theorem \emph{\ref{thmHLPgen}} still applies
because $\Phi^{\prime}(\mathbf{x}_{2})\geq0$ $($see Lemma
\emph{\ref{lemAmann_conv}}$)$.
\end{remark}

Numerous classical inequalities from real analysis can be extended to the
context of ordered Banach spaces via Theorems 4-6. Here are three examples
based on Theorem 4.

\begin{theorem}
\label{thm_S-B}$($The extension of Szeg\"{o} and Bellman inequalities$)$
Suppose that $E$ and $F$ are two ordered Banach spaces, $C$ is an open convex
subset of $E$ that contains the origin and $\Phi:C\rightarrow F$ is a
G\^{a}teaux differentiable $\omega$-convex function whose differential is
isotone. Then for every finite family $\mathbf{x}_{1}\geq\mathbf{x}_{2}%
\geq\cdots\geq\mathbf{x}_{n}\geq0$ of points in $C$ we have%
\begin{multline*}
(1-\sum\nolimits_{k=1}^{n}\left(  -1\right)  ^{k+1})\Phi(0)+\sum
\nolimits_{k=1}^{n}\left(  -1\right)  ^{k+1}\Phi(\mathbf{x}_{k})\geq\Phi
(\sum\nolimits_{k=1}^{n}\left(  -1\right)  ^{k+1}\mathbf{x}_{k})\\
+\sum\nolimits_{k=1}^{n}\omega\left(  \left\Vert x_{k}-x_{k+1}\right\Vert
\right)  +\omega(||\sum\nolimits_{k=1}^{n}\left(  -1\right)  ^{k+1}%
\mathbf{x}_{k}||).
\end{multline*}

\end{theorem}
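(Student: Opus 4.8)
The plan is to recognize the asserted estimate as a single instance of the $L^{\downarrow}$-majorization inequality of Theorem \ref{thmHLPgen}, applied to a pair of discrete probability measures built by splitting the decreasing string $\mathbf{x}_{1}\geq\cdots\geq\mathbf{x}_{n}\geq0$ according to the parity of the index. Writing $\mathbf{s}=\sum_{k=1}^{n}(-1)^{k+1}\mathbf{x}_{k}$ for the alternating sum, I would let the majorizing measure $\nu$ be concentrated (with equal weights) at the odd-indexed points $\mathbf{x}_{1},\mathbf{x}_{3},\dots$, augmented by the origin when $n$ is even, and let the majorized measure $\mu$ be concentrated at $\mathbf{s}$ together with the even-indexed points $\mathbf{x}_{2},\mathbf{x}_{4},\dots$. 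The bookkeeping of signs is arranged precisely so that, once the negatively weighted values $\Phi(\mathbf{x}_{2j})$ are transferred to the right-hand side, the inequality $\sum\lambda_{k}\Phi(\mathbf{y}_{k})\geq\sum\lambda_{k}\Phi(\mathbf{x}_{k})+\sum\lambda_{k}\omega(\Vert\mathbf{x}_{k}-\mathbf{y}_{k}\Vert)$ of Theorem \ref{thmHLPgen} becomes, after clearing the common weight, the claimed inequality; the coefficient $1-\sum_{k=1}^{n}(-1)^{k+1}$ of $\Phi(0)$ is exactly the contribution of the appended origin, present for even $n$ and absent for odd $n$.

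The heart of the argument is to check that $\mu\prec_{L^{\downarrow}}\nu$ in the sense of Definition \ref{def1}. The equality of barycenters (\ref{maj2}) is immediate from the definition of $\mathbf{s}$, since $\mathbf{s}+\sum_{j}\mathbf{x}_{2j}=\sum_{j}\mathbf{x}_{2j-1}$, with the origin absorbing the parity discrepancy when $n$ is even. For the partial-sum inequalities (\ref{maj1}) and for the requirement that the support of $\mu$ form a decreasing string, I would exploit the interlacing of the partial alternating sums $\mathbf{s}_{j}=\sum_{i=1}^{j}(-1)^{i+1}\mathbf{x}_{i}$, namely $0=\mathbf{s}_{0}\leq\mathbf{s}_{2}\leq\mathbf{s}_{4}\leq\cdots\leq\mathbf{s}_{3}\leq\mathbf{s}_{1}=\mathbf{x}_{1}$, which follows termwise from $\mathbf{x}_{1}\geq\cdots\geq\mathbf{x}_{n}\geq0$. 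These relations locate $\mathbf{s}=\mathbf{s}_{n}$ between consecutive even-indexed points and supply both the ordering of the support of $\mu$ and the comparisons required in (\ref{maj1}). With $\mu\prec_{L^{\downarrow}}\nu$ established, Theorem \ref{thmHLPgen} yields the inequality together with its modulus term, and reading off the paired differences $\Vert\mathbf{x}_{k}-\mathbf{y}_{k}\Vert$ produces the $\omega$-contribution; the case $\prec_{R^{\uparrow}}$ is handled symmetrically.

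The step I expect to be the main obstacle is precisely the verification of (\ref{maj1}) together with the arrangement of the support of $\mu$ into a genuine decreasing chain inside the ordered Banach space $E$: unlike the scalar situation of Hardy--Littlewood--P\'olya, here $\mathbf{s}$ need not be comparable a priori with the individual points $\mathbf{x}_{2j}$, so one must insert $\mathbf{s}$ into the chain using only the order relations actually available, i.e. the monotonicity of the $\mathbf{x}_{k}$ and the interlacing of the $\mathbf{s}_{j}$. A secondary point, demanding care rather than new ideas, is the matching of the modulus terms once the pairing realizing (\ref{maj1}) is fixed. As a consistency check, for $n=2$ the whole scheme collapses to the generalized parallelogram law (Corollary \ref{corpar}) with $\mathbf{y}_{1}=\mathbf{x}_{1}$, $\mathbf{y}_{2}=0$ and $\{\mathbf{x}_{1}',\mathbf{x}_{2}'\}=\{\mathbf{x}_{2},\mathbf{s}\}$, which also suggests that the general case could alternatively be obtained by induction, peeling off two consecutive terms at each stage and invoking Corollary \ref{corpar}.
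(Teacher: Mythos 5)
Your reduction breaks down at exactly the step you flag as the main obstacle, and the interlacing of partial sums does not repair it. The chain $0=\mathbf{s}_{0}\leq\mathbf{s}_{2}\leq\mathbf{s}_{4}\leq\cdots\leq\mathbf{s}_{3}\leq\mathbf{s}_{1}$ compares the partial alternating sums \emph{with one another}; it gives no comparison between $\mathbf{s}=\mathbf{s}_{n}$ and the \emph{points} $\mathbf{x}_{2j}$, which is what condition (\ref{x}) of Definition \ref{def1} demands of the support of $\mu$. In fact no such comparison exists: in $E=\mathbb{R}^{2}$ take $\mathbf{x}_{1}=(3,2)\geq\mathbf{x}_{2}=(1,2)\geq\mathbf{x}_{3}=(0,0)$, so that $\mathbf{s}=(2,0)$; then $\mathbf{s}$ and $\mathbf{x}_{2}$ are incomparable, so the support $\{\mathbf{s},\mathbf{x}_{2}\}$ of $\mu$ is a chain under no indexing and Theorem \ref{thmHLPgen} cannot be applied to this pair of measures (nor to any pair in which $\mathbf{s}$ and $\mathbf{x}_{2}$ both carry negative net weight, since they would then both lie in the chain-constrained support). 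Passing to $\prec_{R^{\uparrow}}$ does not help: in Definition \ref{def2} the chain condition falls on the right-hand measure, which in your scheme is again the one carrying $\mathbf{s}$. Even in $E=\mathbb{R}$, where comparability is automatic, the position of $s$ among the $x_{2j}$ depends on the data ($x_{1}=3$, $x_{2}=1$, $x_{3}=0$ gives $s=2>x_{2}$, while $x_{1}=x_{2}=2$, $x_{3}=0$ gives $s=0<x_{2}$), so no indexing fixed in advance verifies (\ref{maj1}); this is precisely where the scalar Hardy--Littlewood--P\'olya argument uses the decreasing rearrangement, which is unavailable here. The same incomparability invalidates your $n=2$ consistency check: Corollary \ref{corpar} requires the inner pair to form a chain between $\mathbf{y}_{2}=0$ and $\mathbf{y}_{1}=\mathbf{x}_{1}$, and $\mathbf{x}_{2}$, $\mathbf{s}=\mathbf{x}_{1}-\mathbf{x}_{2}$ need not be comparable.

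A correct proof must avoid inserting $\mathbf{s}$ into any chain. One workable route: reduce the even case to the odd case by appending $\mathbf{x}_{n+1}=0$, then telescope through the odd partial sums via the identity (for $n$ odd)
\[
\sum_{k=1}^{n}(-1)^{k+1}\Phi(\mathbf{x}_{k})-\Phi(\mathbf{s}_{n})=\sum_{j=1}^{(n-1)/2}\Bigl(\bigl[\Phi(\mathbf{s}_{2j-1})-\Phi(\mathbf{s}_{2j+1})\bigr]-\bigl[\Phi(\mathbf{x}_{2j})-\Phi(\mathbf{x}_{2j+1})\bigr]\Bigr),
\]
and observe that $\mathbf{s}_{2j-1}=\mathbf{s}_{2j+1}+\mathbf{h}_{j}$ and $\mathbf{x}_{2j}=\mathbf{x}_{2j+1}+\mathbf{h}_{j}$ share the increment $\mathbf{h}_{j}=\mathbf{x}_{2j}-\mathbf{x}_{2j+1}\geq0$, while the base points satisfy $\mathbf{s}_{2j+1}\geq\mathbf{x}_{2j+1}$ (their difference is $\mathbf{s}_{2j}\geq0$). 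Isotonicity of $\Phi^{\prime}$ then gives
\[
\Phi(\mathbf{s}_{2j+1}+\mathbf{h}_{j})-\Phi(\mathbf{s}_{2j+1})=\int_{0}^{1}\Phi^{\prime}(\mathbf{s}_{2j+1}+t\mathbf{h}_{j})(\mathbf{h}_{j})\,\mathrm{d}t\geq\int_{0}^{1}\Phi^{\prime}(\mathbf{x}_{2j+1}+t\mathbf{h}_{j})(\mathbf{h}_{j})\,\mathrm{d}t=\Phi(\mathbf{x}_{2j})-\Phi(\mathbf{x}_{2j+1}),
\]
so every bracket is nonnegative, and no comparison of $\mathbf{s}$ with any $\mathbf{x}_{2j}$ is ever needed; this is the vector analogue of the ``equal increasing increments'' property rather than a literal application of Theorem \ref{thmHLPgen}. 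Finally, your closing claim that the modulus terms can be ``read off'' from the pairing cannot be sustained: for $x_{1}=x_{2}=x_{3}$ the alternating sum equals $\Phi(x_{1})=\Phi(s)$, yet the asserted right-hand side contains the strictly positive term $\omega(\Vert\mathbf{s}\Vert)$, so the stated $\omega$-terms are not obtainable by any pairing, and the argument above establishes the inequality in the purely convex case $\omega\equiv0$.
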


The proof is immediate, by considering separately the cases where $n$ is odd
or even. The weighted case of Theorem \ref{thm_S-B} can be easily deduced from
it following the argument of Olkin \cite{Olkin} for the strings of real numbers.

\begin{theorem}
Let $f:\mathbb{R}\rightarrow\mathbb{R}$ be a nondecreasing, differentiable and
convex function. If $A_{1},A_{2},...,A_{n}$ and $B_{1},B_{2},...,B_{n}$ are
two families of elements in $\operatorname*{Sym}(N,\mathbb{R})$ such that
\[
A_{1}\geq A_{2}\geq\cdots\geq A_{n}\geq0\text{ and }\sum\nolimits_{k=1}%
^{j}A_{k}\leq\sum\nolimits_{k=1}^{j}B_{k}\text{ for }j\in\{1,2,...,n\},
\]
then
\[
\sum\nolimits_{k=1}^{n}\operatorname*{Trace}f\left(  A_{k}\right)  \leq
\sum\nolimits_{k=1}^{n}\operatorname*{Trace}f\left(  B_{k}\right)  .
\]

\end{theorem}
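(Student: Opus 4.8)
The plan is to recognize the conclusion as an instance of the weak $L^{\downarrow}$-majorization part of Theorem \ref{thmHLPgen}. I would take $E=\operatorname{Sym}(N,\mathbb{R})$ with the Loewner order (whose cone of positive semidefinite matrices is self-dual under the trace inner product), $F=\mathbb{R}$, the open convex set $C=\operatorname{Sym}(N,\mathbb{R})$, and the equal weights $\lambda_{k}=1/n$. The hypotheses say exactly that the string $A_{1}\geq\cdots\geq A_{n}\geq0$ is decreasing and that $\sum_{k=1}^{j}\lambda_{k}A_{k}\leq\sum_{k=1}^{j}\lambda_{k}B_{k}$ for every $j$, i.e.
\[
\frac{1}{n}\sum_{k=1}^{n}\delta_{A_{k}}\prec_{wL^{\downarrow}}\frac{1}{n}\sum_{k=1}^{n}\delta_{B_{k}}
\]
in the sense of Definition \ref{def1}. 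Applying Theorem \ref{thmHLPgen} to $\Phi(A)=\operatorname{Trace}f(A)$, with modulus $\omega\equiv0$ because $f$ is convex, the inequality (\ref{Cons2}) at its top index, multiplied by $n$, is precisely the asserted trace inequality. Everything therefore reduces to verifying that $\Phi$ satisfies the four hypotheses required in the weak case: Gâteaux differentiability, convexity, isotonicity, and isotonicity of the differential.

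The first three are routine spectral facts. The directional derivative of the trace functional is
\[
\Phi^{\prime}(A)H=\left.\frac{d}{dt}\right|_{t=0}\operatorname{Trace}f(A+tH)=\operatorname{Trace}\bigl(f^{\prime}(A)H\bigr),
\]
so $\Phi$ is Gâteaux differentiable and, under the trace inner product, its gradient is the matrix $f^{\prime}(A)$. Convexity is the classical fact that $A\mapsto\operatorname{Trace}f(A)$ is convex whenever $f$ is convex; one checks it by computing the Hessian in an eigenbasis of $A$, which equals
\[
\nabla^{2}\Phi(A)[H,H]=\sum_{i,j}\frac{f^{\prime}(\lambda_{i})-f^{\prime}(\lambda_{j})}{\lambda_{i}-\lambda_{j}}\,|H_{ij}|^{2}\geq0,
\]
the quotient being nonnegative since $f^{\prime}$ is nondecreasing. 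Finally, $f$ nondecreasing gives $f^{\prime}\geq0$, hence $f^{\prime}(A)$ is positive semidefinite and $\Phi^{\prime}(A)\geq0$ for every $A$; Lemma \ref{lemAmann_conv} then yields that $\Phi$ is isotone.

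The delicate point, and the one I expect to be the main obstacle, is the isotonicity of the differential $\Phi^{\prime}$. By self-duality of the positive semidefinite cone, $\Phi^{\prime}(A)\leq\Phi^{\prime}(B)$ in $E^{\ast}$ means
\[
\operatorname{Trace}\bigl((f^{\prime}(B)-f^{\prime}(A))H\bigr)\geq0\quad\text{for all }H\geq0,
\]
that is, $f^{\prime}(A)\leq f^{\prime}(B)$ in the Loewner order whenever $A\leq B$. In other words, everything hinges on the gradient map $A\mapsto f^{\prime}(A)$ being order preserving for the Loewner order, an operator-monotonicity property of $f^{\prime}$ that is genuinely spectral and does not reduce to the scalar monotonicity of $f^{\prime}$ alone. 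I would secure this from the trace-function analysis collected in the Appendix (Example \ref{ex_trace}), which certifies that $\operatorname{Trace}f$ is an isotone convex function with isotone differential; once this property is in hand, Theorem \ref{thmHLPgen} applies verbatim and completes the proof.
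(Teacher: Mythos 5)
You take exactly the route the paper takes (its entire proof is one sentence: Theorem \ref{thmHLPgen} combined with Example \ref{ex_trace}), and most of your verifications are sound: the formula $\Phi'(A)H=\operatorname{trace}(f'(A)H)$, the convexity of $\Phi=\operatorname{trace}f(\cdot)$, and the isotonicity of $\Phi$ via $f'(A)\succeq 0$ and Lemma \ref{lemAmann_conv}. The genuine gap is the step you yourself flag as the crux and then wave through: the isotonicity of $\Phi'$, which, as you correctly note, amounts by self-duality of the positive semidefinite cone to $A\leq B\Rightarrow f'(A)\leq f'(B)$ in the Loewner order, i.e. to operator monotonicity of $f'$. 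Example \ref{ex_trace} does not \emph{certify} this: its one-line justification (``$f'$ is isotone, hence $d\Phi$ is isotone'') is precisely the conflation of scalar monotonicity with operator monotonicity that you warned against two sentences earlier. A nondecreasing derivative of a convex function need not be operator monotone, so under the stated hypotheses the property you need is unavailable; your proof, like the paper's, rests at this point on an unproved --- and in fact false --- claim.

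The gap cannot be repaired, because the theorem as stated is false. Take $f(x)=\max(x,0)^{3}$, which is nondecreasing, convex and $C^{2}$ on $\mathbb{R}$, with $f'(x)=3\max(x,0)^{2}$; the square function is not operator monotone. Put
\[
A_{1}=\begin{pmatrix}2&1\\1&1\end{pmatrix}\geq A_{2}=\begin{pmatrix}1&1\\1&1\end{pmatrix}\geq 0,\qquad v=\begin{pmatrix}1\\-2\end{pmatrix},\qquad B_{1}=A_{1}+t\,vv^{T},\qquad B_{2}=A_{2}-t\,vv^{T}.
\]
Then $B_{1}\geq A_{1}$ and $B_{1}+B_{2}=A_{1}+A_{2}$, so all hypotheses hold (even with equality of the full sums, i.e. genuine $\prec_{L^{\downarrow}}$-majorization), while
\[
\frac{d}{dt}\Big|_{t=0}\sum_{k=1}^{2}\operatorname{trace}f(B_{k})=v^{T}\bigl(f'(A_{1})-f'(A_{2})\bigr)v=3\,v^{T}\bigl(A_{1}^{2}-A_{2}^{2}\bigr)v=-3<0,
\]
so for small $t>0$ one gets $\sum_{k}\operatorname{trace}f(B_{k})<\sum_{k}\operatorname{trace}f(A_{k})$; numerically, $t=0.05$ gives $25.97\ldots<26$. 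The statement (and your argument, and the paper's) becomes correct if one adds the hypothesis that $f'$ is operator monotone, which is what Theorem \ref{thmHLPgen} genuinely requires here; that extra hypothesis covers the quoted special case $f(x)=x^{2}$ of K. L. Chung, where $f'$ is linear. So your instinct about where the obstacle lies was exactly right --- the error was trusting the Appendix to have removed it.
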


This is a consequence of Theorem 4 when combined with Example \ref{ex_trace}
in the Appendix. The particular case where $f(x)=x^{2}$ is attributed by Petz
\cite{P} to K. L. Chung.

The third example concerns the case of Popoviciu's inequality. In its simplest
form this inequality asserts that every convex function $\Phi$ defined on a
real interval $I$ verifies the inequality%
\begin{multline*}
\frac{\Phi(x)+\Phi(y)+\Phi(z)}{3}-\Phi\left(  \frac{x+y+z}{3}\right) \\
\geq2\,\left[  \frac{\Phi\left(  \frac{x+y}{2}\right)  +\Phi\left(  \frac
{y+z}{2}\right)  +\Phi\left(  \frac{z+x}{2}\right)  }{3}-\Phi\left(
\frac{x+y+z}{3}\right)  \right]  ,
\end{multline*}
whenever $x,y,z\in I$ (which is an illustration of the contractibility of
Jensen gap in the case of triplets of elements. See \cite{Pop65} and
\cite{NP2018} for details. While Popoviciu's inequality makes sense in any
Banach space, it was shown in \cite{BNP2010} that it actually works only for a
special class of convex functions (including the norm of a Hilbert space).
Based on Theorem \ref{thmHLPgen}, we will show that the class of useful
functions can be enlarged at the cost of limiting the triplets of elements
under consideration.

\begin{theorem}
\label{thmPop}Suppose that $E$ and $F$ are two ordered Banach spaces, $C$ is
an open convex subset of $E$ and $\mathbf{x}\geq\mathbf{y}\geq\,\mathbf{z}$ is
a triplet of points$~$in $C.$ In addition, $\Phi:C\rightarrow F$ is a
G\^{a}teaux differentiable $\omega$-convex function whose differential is isotone.

$(a)$ If $\mathbf{x}\geq(\mathbf{x}+\mathbf{y}+\mathbf{z})/3\geq$\textbf{$y$%
}$\geq\,\mathbf{z},$ then
\begin{multline*}
\frac{\Phi(\mathbf{x})+\Phi(\mathbf{y})+\Phi(\mathbf{z})}{3}+\Phi\left(
\frac{\mathbf{x}+\mathbf{y}+\mathbf{z}}{3}\right) \\
\geq\frac{2}{3}\,\left[  \Phi\left(  \frac{\mathbf{x}+\mathbf{y}}{2}\right)
+\Phi\left(  \frac{\mathbf{y}+\mathbf{z}}{2}\right)  +\Phi\left(
\frac{\mathbf{z}+\mathbf{x}}{2}\right)  \right]  +\frac{1}{6}\omega\left(
\frac{\left\Vert \mathbf{x}-\mathbf{y}\right\Vert }{2}\right) \\
+\frac{1}{6}\omega\left(  \frac{\left\Vert 2\mathbf{z}-\mathbf{x}%
-\mathbf{y}\right\Vert }{6}\right)  +\frac{1}{3}\omega\left(  \frac{\left\Vert
2\mathbf{y}-\mathbf{x}-\mathbf{z}\right\Vert }{6}\right)  +\frac{1}{3}%
\omega\left(  \frac{\left\Vert \mathbf{z}-\mathbf{y}\right\Vert }{2}\right)  .
\end{multline*}

$(b)$ If $\mathbf{x}\geq\mathbf{y}\geq(\mathbf{x}+\mathbf{y}+\mathbf{z}%
)/3\geq\,\mathbf{z},$ then%
\begin{multline*}
\frac{\Phi(\mathbf{x})+\Phi(\mathbf{y})+\Phi(\mathbf{z})}{3}+\Phi\left(
\frac{\mathbf{x}+\mathbf{y}+\mathbf{z}}{3}\right) \\
\geq\frac{2}{3}\,\left[  \Phi\left(  \frac{\mathbf{x}+\mathbf{y}}{2}\right)
+\Phi\left(  \frac{\mathbf{y}+\mathbf{z}}{2}\right)  +\Phi\left(
\frac{\mathbf{z}+\mathbf{x}}{2}\right)  \right]  +\frac{1}{3}\omega\left(
\frac{\left\Vert \mathbf{x}-\mathbf{y}\right\Vert }{2}\right) \\
+\frac{1}{6}\omega\left(  \frac{\left\Vert 2\mathbf{x}-\mathbf{y}%
-\mathbf{z}\right\Vert }{6}\right)  +\frac{1}{3}\omega\left(  \frac{\left\Vert
2\mathbf{y}-\mathbf{x}-\mathbf{z}\right\Vert }{6}\right)  +\frac{1}{6}%
\omega\left(  \frac{\left\Vert \mathbf{z}-\mathbf{y}\right\Vert }{2}\right)  .
\end{multline*}

\end{theorem}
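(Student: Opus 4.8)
The plan is to realize each of the two asserted inequalities as a single instance of the differentiable majorization inequality $(\ref{Cons1})$ of Theorem \ref{thmHLPgen}, applied to a carefully chosen pair of discrete probability measures built from the triplet $\mathbf{x}\geq\mathbf{y}\geq\mathbf{z}$. Write $\mathbf{a}=(\mathbf{x}+\mathbf{y}+\mathbf{z})/3$ for the centroid and $\mathbf{m}_{1}=(\mathbf{x}+\mathbf{y})/2$, $\mathbf{m}_{2}=(\mathbf{y}+\mathbf{z})/2$, $\mathbf{m}_{3}=(\mathbf{z}+\mathbf{x})/2$ for the three edge midpoints. Since $\mathbf{x}\geq\mathbf{y}\geq\mathbf{z}$, subtracting gives $\mathbf{m}_{1}-\mathbf{m}_{3}=(\mathbf{y}-\mathbf{z})/2\geq0$ and $\mathbf{m}_{3}-\mathbf{m}_{2}=(\mathbf{x}-\mathbf{y})/2\geq0$, so the midpoints form the decreasing string $\mathbf{m}_{1}\geq\mathbf{m}_{3}\geq\mathbf{m}_{2}$. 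This string, each point repeated twice, serves as the support of the left (majorized) measure
\[
\mu=\tfrac13\delta_{\mathbf{m}_{1}}+\tfrac13\delta_{\mathbf{m}_{2}}+\tfrac13\delta_{\mathbf{m}_{3}},
\]
realized as six atoms of weight $1/6$ listed in the order $\mathbf{m}_{1},\mathbf{m}_{1},\mathbf{m}_{3},\mathbf{m}_{3},\mathbf{m}_{2},\mathbf{m}_{2}$, while the right (majorizing) measure is $\nu=\tfrac16\delta_{\mathbf{x}}+\tfrac16\delta_{\mathbf{y}}+\tfrac16\delta_{\mathbf{z}}+\tfrac12\delta_{\mathbf{a}}$, also carried by six atoms of weight $1/6$.

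The heart of the argument is to pair the six atoms of $\nu$ with those of $\mu$ so that the partial-sum conditions $(\ref{maj1})$ hold for the fixed decreasing order above, and this is exactly where the hypothesis separating case $(a)$ from case $(b)$ enters. In case $(a)$, where $\mathbf{x}\geq\mathbf{a}\geq\mathbf{y}\geq\mathbf{z}$, I would use the pairing $(\mathbf{m}_{1},\mathbf{x}),(\mathbf{m}_{1},\mathbf{a}),(\mathbf{m}_{3},\mathbf{a}),(\mathbf{m}_{3},\mathbf{a}),(\mathbf{m}_{2},\mathbf{y}),(\mathbf{m}_{2},\mathbf{z})$; in case $(b)$, where $\mathbf{x}\geq\mathbf{y}\geq\mathbf{a}\geq\mathbf{z}$, I would instead slide $\mathbf{y}$ forward and use $(\mathbf{m}_{1},\mathbf{x}),(\mathbf{m}_{1},\mathbf{y}),(\mathbf{m}_{3},\mathbf{a}),(\mathbf{m}_{3},\mathbf{a}),(\mathbf{m}_{2},\mathbf{a}),(\mathbf{m}_{2},\mathbf{z})$. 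In both cases the marginal of the pairing on the first coordinates is $\mu$ and on the second coordinates is $\nu$, so the two measures share the common weight vector $(1/6,\dots,1/6)$ demanded by Definition \ref{def1}.

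I would then check the six relations $\sum_{k=1}^{n}\mathbf{x}_{k}\leq\sum_{k=1}^{n}\mathbf{y}_{k}$ one at a time. Most reduce to $\mathbf{x}\geq\mathbf{y}\geq\mathbf{z}$ alone (for instance $n=1$ needs only $\mathbf{m}_{1}\leq\mathbf{x}$, i.e. $\mathbf{y}\leq\mathbf{x}$), and the terminal identity $\sum_{k=1}^{6}\tfrac16\mathbf{x}_{k}=\mathbf{a}=\sum_{k=1}^{6}\tfrac16\mathbf{y}_{k}$ secures the equality $(\ref{maj2})$. The one genuinely case-sensitive step is $n=3$: in case $(a)$ it reads $2\mathbf{m}_{1}+\mathbf{m}_{3}\leq\mathbf{x}+2\mathbf{a}$, whose difference equals $(\mathbf{x}+\mathbf{z}-2\mathbf{y})/6$, nonnegative precisely because $\mathbf{a}\geq\mathbf{y}$; in case $(b)$ the corresponding step $2\mathbf{m}_{1}+\mathbf{m}_{3}\leq\mathbf{x}+\mathbf{y}+\mathbf{a}$ has difference $(2\mathbf{y}-\mathbf{x}-\mathbf{z})/6\geq0$, nonnegative precisely because $\mathbf{y}\geq\mathbf{a}$. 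Hence in each case $\mu\prec_{L^{\downarrow}}\nu$.

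Finally I would invoke $(\ref{Cons1})$: as $\Phi$ is G\^ateaux differentiable, $\omega$-convex and has isotone differential, the relation $\mu\prec_{L^{\downarrow}}\nu$ yields $\sum_{k=1}^{6}\tfrac16\Phi(\mathbf{y}_{k})\geq\sum_{k=1}^{6}\tfrac16\Phi(\mathbf{x}_{k})+\sum_{k=1}^{6}\tfrac16\omega(\|\mathbf{x}_{k}-\mathbf{y}_{k}\|)$. Collecting the $\Phi$-values reassembles the two sides of the Popoviciu combination, while collecting the modulus contributions—using $\|\mathbf{m}_{1}-\mathbf{x}\|=\|\mathbf{m}_{1}-\mathbf{y}\|=\|\mathbf{x}-\mathbf{y}\|/2$, $\|\mathbf{m}_{1}-\mathbf{a}\|=\|2\mathbf{z}-\mathbf{x}-\mathbf{y}\|/6$, $\|\mathbf{m}_{3}-\mathbf{a}\|=\|2\mathbf{y}-\mathbf{x}-\mathbf{z}\|/6$, $\|\mathbf{m}_{2}-\mathbf{a}\|=\|2\mathbf{x}-\mathbf{y}-\mathbf{z}\|/6$ and $\|\mathbf{m}_{2}-\mathbf{y}\|=\|\mathbf{m}_{2}-\mathbf{z}\|=\|\mathbf{z}-\mathbf{y}\|/2$—produces exactly the four weighted $\omega$-terms displayed in $(a)$ and in $(b)$. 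The delicate bookkeeping, which I expect to be the fussiest part rather than the majorization itself, is tracking which displacements coincide so that their weights add (e.g. the two copies of $(\mathbf{m}_{3},\mathbf{a})$ merging into the coefficient $1/3$) and matching the normalizing constant between the $\Phi$-sums and the modulus terms.
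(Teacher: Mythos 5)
Your proposal is correct and is essentially the paper's own proof: the paper uses exactly the same two six-atom families with weights $1/6$ (left support $\mathbf{m}_1,\mathbf{m}_1,\mathbf{m}_3,\mathbf{m}_3,\mathbf{m}_2,\mathbf{m}_2$ in decreasing order; right support $\mathbf{x},\mathbf{a},\mathbf{a},\mathbf{a},\mathbf{y},\mathbf{z}$ in case $(a)$ and $\mathbf{x},\mathbf{y},\mathbf{a},\mathbf{a},\mathbf{a},\mathbf{z}$ in case $(b)$), verifies the same case-sensitive partial-sum inequalities, and then invokes Theorem \ref{thmHLPgen}. The one wrinkle you flag at the end — that rescaling the weight-$1/6$ inequality to match the displayed $\Phi$-normalization doubles the $\omega$-coefficients, whereas the statement keeps them at the $1/6$ level (harmless when $\omega\geq 0$) — is present in the paper's own write-up as well, so it is not a defect of your argument relative to the paper's.
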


\begin{proof}
$(a)$ In this case%
\[
(\mathbf{x}+\mathbf{y})/2\geq(\mathbf{x}+\mathbf{z})/2\geq(\mathbf{y}%
+\mathbf{z})/2\text{ and }\frac{\mathbf{x}+\mathbf{z}}{2}\geq\mathbf{y.}%
\]
so the conclusion follows from Theorem \ref{thmHLPgen} applied to the families
of points%
\[
\mathbf{x}_{1}=\mathbf{x}_{2}=(\mathbf{x}+\mathbf{y})/2\geq\mathbf{x}%
_{3}=\mathbf{x}_{4}=(\mathbf{x}+\mathbf{z})/2\geq\mathbf{x}_{5}=\mathbf{x}%
_{6}=(\mathbf{y}+\mathbf{z})/2
\]
and
\[
\mathbf{y}_{1}=\mathbf{x}\geq\mathbf{y}_{2}=\mathbf{y}_{3}=\mathbf{y}%
_{4}=(\mathbf{x}+\mathbf{y}+\mathbf{z})/3\geq\mathbf{y}_{5}=\mathbf{y}%
\geq\mathbf{y}_{6}=\mathbf{z,}%
\]
by noticing that%
\begin{align*}
(\mathbf{x}+\mathbf{y})/2  &  \leq\mathbf{x}\\
(\mathbf{x}+\mathbf{y})/2+(\mathbf{x}+\mathbf{y})/2  &  \leq\mathbf{x}%
+(\mathbf{x}+\mathbf{y}+\mathbf{z})/3\\
(\mathbf{x}+\mathbf{y})/2+(\mathbf{x}+\mathbf{y})/2+(\mathbf{x}+\mathbf{z})/2
&  \leq\mathbf{x}+(\mathbf{x}+\mathbf{y}+\mathbf{z})/3+(\mathbf{x}%
+\mathbf{y}+\mathbf{z})/3
\end{align*}
and%
\begin{multline*}
(\mathbf{x}+\mathbf{y})/2+(\mathbf{x}+\mathbf{y})/2+(\mathbf{x}+\mathbf{z}%
)/2+(\mathbf{x}+\mathbf{z})/2\\
\leq\mathbf{x}+(\mathbf{x}+\mathbf{y}+\mathbf{z})/3+(\mathbf{x}+\mathbf{y}%
+\mathbf{z})/3+(\mathbf{x}+\mathbf{y}+\mathbf{z})/3\\
(\mathbf{x}+\mathbf{y})/2+(\mathbf{x}+\mathbf{y})/2+(\mathbf{x}+\mathbf{z}%
)/2+(\mathbf{x}+\mathbf{z})/2+(\mathbf{y}+\mathbf{z})/2\\
\leq\mathbf{x}+(\mathbf{x}+\mathbf{y}+\mathbf{z})/3+(\mathbf{x}+\mathbf{y}%
+\mathbf{z})/3+(\mathbf{x}+\mathbf{y}+\mathbf{z})/3+\mathbf{y}\\
(\mathbf{x}+\mathbf{y})/2+(\mathbf{x}+\mathbf{y})/2+(\mathbf{x}+\mathbf{z}%
)/2+(\mathbf{x}+\mathbf{z})/2+(\mathbf{y}+\mathbf{z})/2+(\mathbf{y}%
+\mathbf{z})/2\\
=\mathbf{x}+(\mathbf{x}+\mathbf{y}+\mathbf{z})/3+(\mathbf{x}+\mathbf{y}%
+\mathbf{z})/3+(\mathbf{x}+\mathbf{y}+\mathbf{z})/3+\mathbf{y}+\mathbf{z}.
\end{multline*}

$(b)$ The proof is similar, by considering the families%
\begin{align*}
\mathbf{x}_{1}  &  =\mathbf{x}_{2}=(\mathbf{x}+\mathbf{y})/2\geq\mathbf{x}%
_{3}=\mathbf{x}_{4}=(\mathbf{x}+\mathbf{z})/2\geq\mathbf{x}_{5}=\mathbf{x}%
_{6}=(\mathbf{y}+\mathbf{z})/2\\
\mathbf{y}_{1}  &  =\mathbf{x},\quad\mathbf{y}_{2}=\mathbf{y},\quad
\mathbf{y}_{3}=\mathbf{y}_{4}=\mathbf{y}_{5}=(\mathbf{x}+\mathbf{y}%
+\mathbf{z})/3,\quad\mathbf{y}_{6}=\mathbf{z.}%
\end{align*}

\end{proof}

In the case where $E=\mathbb{R}$ and $C$ is an interval of $\mathbb{R},$ we
have $\left[  \mathbf{z},\mathbf{x}\right]  =\left[  \mathbf{z},\mathbf{y}%
\right]  \cup\left[  \mathbf{y},\mathbf{x}\right]  ,$ so $(\mathbf{x}%
+\mathbf{y}+\mathbf{z})/3$ lies automatically in one of the intervals $\left[
\mathbf{z},\mathbf{y}\right]  $ and $\left[  \mathbf{y},\mathbf{x}\right]  .$
This allows us to recover the aforementioned result of Popoviciu.

\section{Appendix: Examples of differentiable functions which are isotone
and/or admit isotone differentials}

\begin{example}
\label{ex1}Let $I$ be one of the intervals $(-\infty,0],$ $[0,\infty)$ or
$(-\infty,\infty)).$ The perspective function associated to a convex function
$f:I\rightarrow\mathbb{R}$ is the convex function%
\[
\tilde{f}:I\times(0,\infty),\text{\quad}\tilde{f}(x,y)=yf(x/y).
\]
See \emph{\cite{NP2018}}, Section $3.5$. Assuming $f$ of class $C^{2},$ then%
\[
\frac{\partial\tilde{f}}{\partial x}=f^{\prime}\left(  \frac{x}{y}\right)
,\quad\frac{\partial\tilde{f}}{\partial y}=-\frac{x}{y}f^{\prime}\left(
\frac{x}{y}\right)  +f\left(  \frac{x}{y}\right)  \text{\quad and }%
\frac{\partial^{2}\tilde{f}}{\partial x\partial y}=-\frac{x}{y^{2}}%
f^{\prime\prime}\left(  \frac{x}{y}\right)  .
\]
As a consequence, if $I=(-\infty,0)$, then $d\tilde{f}$ is isotone; if in
addition $f$ is nonnegative and increasing, then $\tilde{f}$ itself is isotone.
\end{example}

\begin{example}
\label{ex4}Let $p\in(1,\infty)$. The function%
\[
\Phi:L^{p}\left(  \mathbb{R}\right)  \rightarrow\mathbb{R},\text{\quad}%
\Phi(f)=\left\Vert f\right\Vert _{p}^{p}=\int_{\mathbb{R}}\left\vert
f\right\vert ^{p}\mathrm{d}t
\]
is convex and differentiable, its differential being defined by the formula%
\[
d\Phi(f)(h)=p\int_{\mathbb{R}}h\left\vert f\right\vert ^{p-1}%
\operatorname*{sgn}f\mathrm{d}t\text{\quad for all }f,h\in L^{p}\left(
\mathbb{R}\right)  .
\]
See \emph{\cite{NP2018}}, Proposition $3.7.8$, p. $151$. Clearly, $\Phi$ and
its differential are isotone on the positive cone of $L^{p}\left(
\mathbb{R}\right)  .$ A variant of this example within the framework of
Schatten classes is provided by Theorem $16$ in \emph{\cite{KST}}.
\end{example}

\begin{example}
\label{ex5}The negative entropy function, $E\left(  \mathbf{x}\right)
=\sum_{k=1}^{N}x_{k}\log x_{k},$ is $C^{\infty}$-differentiable on
\[
\mathbb{R}_{++}^{N}=\{\mathbf{x}=\left(  x_{1},...,x_{N}\right)  \in
\mathbb{R}^{N}:x_{1},\ldots,x_{N}>0\}
\]
and strongly convex on any compact subset $K$ of $\mathbb{R}_{++}^{N}$. The
differential of $\Phi$ is the map $d\Phi:\mathbb{R}_{++}^{N}\rightarrow\left(
\mathbb{R}^{N}\right)  ^{\ast}$ given by the formula
\[
d\Phi(\mathbf{x})\mathbf{v}=\sum\nolimits_{k=1}^{N}(1+\log x_{k}%
)v_{k},\text{\quad}\mathbf{x}\in\mathbb{R}_{++}^{N}~\text{and }\mathbf{v}%
\in\mathbb{R}^{N},
\]
so that $\mathbf{x}\leq\mathbf{y}$ in $\mathbb{R}_{++}^{N}$ implies
$d\Phi(\mathbf{x})\leq d\Phi(\mathbf{y}).$
\end{example}

\begin{example}
\label{ex6}The log-sum-exp function is defined on $\mathbb{R}^{N}$ by the
formula
\[
\operatorname*{LSE}(\mathbf{x})=\log(\sum\nolimits_{k=1}^{N}e^{x_{k}%
}),\text{\quad}\mathbf{x}\in\mathbb{R}^{N}.
\]
This function is infinitely differentiable, isotone and convex, but it is not
strongly convex. See \emph{\cite{NP2018}}, Example $3.8.9$, pp. \emph{157-158}%
. A simple argument showing that the differential of $\operatorname*{LSE}$ is
not isotone is given in the comments after Lemma \emph{\ref{lem2box}}. The
log-sum-exp function is the Legendre-Fenchel conjugate of the restriction of
the negative entropy function $E$ to the simplex $\Delta=\left\{
x:\mathbf{x}\in\mathbb{R}^{N},\text{ }\sum_{k=1}^{N}x_{k}=1\right\}  .$ See
\emph{\cite{BV2004}}, p. $93$. Since $E$ is strongly convex, it follows from
Lemma \emph{\ref{lem4}} that the log-sum-exp function is strongly smooth.
\end{example}

\begin{example}
\label{ex_trace}$($Trace functions of matrices$)$ Denote by
$\operatorname*{Sym}(N,\mathbb{R)}$ the ordered Banach space of all $N\times
N$-dimensional symmetric matrices with real coefficients endowed with the
Frobenius norm and the \emph{L\"{o}wner ordering},%
\[
A\leq B\text{ if and only if }\langle A\mathbf{x},\mathbf{x}\rangle\leq\langle
B\mathbf{x},\mathbf{x}\rangle\text{ for all }\mathbf{x}\in\mathbb{R}^{N}.
\]
If $f:\mathbb{R\rightarrow R}$ is a continuously differentiable $($strongly$)$
convex function, then the formula
\[
\Phi(A)=\operatorname*{trace}(f(A))
\]
defines a differentiable $($strongly$)$ convex function on
$\operatorname*{Sym}(n,\mathbb{R})$. Since $d\Phi(A)X=\operatorname*{trace}%
\left(  f^{\prime}(A)X\right)  $ and $f^{\prime}$ is isotone, it follows that
$d\Phi$ is isotone too. According to\ Weyl's monotonicity principle $($see
\emph{\cite{NP2018}}, Corollary $4.4.3$, p. $203),$\emph{ }the function $\Phi$
is isotone if $f$ itself is isotone.

Two particular cases are of a special interest:

$(a)$ The operator analogue of the negative entropy function presented in
Example $4$ is the negative von Neumann entropy, defined on the compact convex
set $C=\left\{  A\in\operatorname*{Sym}\nolimits^{++}(N,\mathbb{R}%
):\operatorname*{trace}(A)=1\right\}  $ via the formula
\[
S(A)=\operatorname*{trace}\left(  A\log A\right)  =\sum\nolimits_{k=1}%
^{N}\lambda_{k}(A)\log\lambda_{k}(A),
\]
where $\lambda_{1}(A),...,\lambda_{N}(A)$ are the eigenvalues of $A$ counted
with their multiplicity. According to the preceding discussion, this function
is convex and differentiable and its differential is isotone. One can prove
$($using Lemma \emph{\ref{lem4}}$)$ that the negative von Neumann entropy is
$1/2$-strongly convex and its Legendre-Fenchel conjugate, the convex function
$\log(\operatorname*{trace}(e^{A}))),$ is $2$-smooth. See \emph{\cite{KST}},
Theorem $16).$

$(b)$ The function $\operatorname*{trace}(e^{A})$ is $\log$-convex and
continuously differentiable, with isotone differential. However, the
differential of the convex function $\log(\operatorname*{trace}(e^{A}))$ is
not anymore isotone. See Example $5,$ which discusses the case of diagonal matrices.
\end{example}

\medskip

\noindent\textbf{\noindent Acknowledgement. }The author would like to thank
\c{S}tefan Cobza\c{s}, Sorin G. Gal and Flavia-Corina Mitroi-Symeonidis for
useful conversations on the subject of this paper and to the reviewer for many
valuable and constructive comments that have improved the final version of the paper.

\end{document}